\documentclass[trsc,sblanonrev]{informs4}
\RequirePackage{tgtermes}
\RequirePackage{newtxtext}
\RequirePackage{newtxmath}
\RequirePackage{bm}
\RequirePackage{endnotes}

\OneAndAHalfSpacedXII 

\usepackage{algorithm}
\usepackage{algpseudocode}
\usepackage{tikz}

\usepackage{natbib}
 \bibpunct[, ]{(}{)}{,}{a}{}{,}%
 \def\bibfont{\small}%
\EquationsNumberedThrough    

\TheoremsNumberedThrough     
\ECRepeatTheorems  %

\usepackage{mathtools}
\usepackage{alphalph,etoolbox}

\usepackage{booktabs}
\usepackage{multirow}
\usepackage{tabularx}
\usepackage{siunitx}

\usepackage{algorithm}
\usepackage{algpseudocode}

\usepackage{caption}
\usepackage{subcaption}
\usepackage{tikz}
\usetikzlibrary{arrows.meta,positioning,calc,shapes,fit}
\usepackage{pgfplots}
\pgfplotsset{compat=1.17}

\usepackage[hidelinks]{hyperref}

\usepackage{natbib}
\bibpunct[, ]{(}{)}{,}{a}{}{,}%
\def\bibfont{\small}%

\allowdisplaybreaks

\begin{document}

\RUNAUTHOR{Estrada-Garcia and Shen}
\RUNTITLE{Dynamic Infrastructure Inspection under Endogenous Uncertainty}
\TITLE{Multi-Stage Stochastic Optimization and Reinforcement Learning Approaches for Dynamic Inspection of Infrastructure Systems}

\ARTICLEAUTHORS{%
\AUTHOR{Juan-Alberto Estrada-Garc\'ia}
\AFF{Department of Industrial and Operations Engineering, University of Michigan, Ann Arbor, USA, \EMAIL{juanest@umich.edu}}
\AUTHOR{Siqian Shen}
\AFF{Department of Industrial and Operations Engineering, University of Michigan, Ann Arbor, USA, \EMAIL{siqian@umich.edu}}
}

\ABSTRACT{
{\bf Problem Definition:} 
We study a dynamic inspection problem for infrastructure systems in which multiple vehicles are routed and scheduled to monitor components subject to heterogeneous and stochastic failures. A key challenge is endogenous uncertainty in which failure-time realizations are filtered through prior inspection actions and failure-propagation dynamics, so the distribution of future operational states depends on the history of inspection decisions. This yields a multi-stage decision problem that integrates routing, scheduling, and decision-dependent reliability dynamics over
time.
{\bf Methodology / Results:} 
We formulate the problem as a multi-stage stochastic mixed-integer program that jointly optimizes routing and scheduling decisions under endogenous uncertainty. We develop a stochastic dual dynamic integer programming (SDDiP) algorithm that integrates dual approximation, integer state reduction, and sampling-based forward simulation to approximate cost-to-go functions. In parallel, we propose a reinforcement learning framework that learns job clustering structures and routing policies through interaction with simulated system dynamics and failure processes. Numerical experiments on infrastructure networks with diverse topologies and failure patterns show that SDDiP yields high-quality solutions but faces scalability limitations, while the learning-based approach achieves strong scalability with competitive performance, highlighting a trade-off between optimality and tractability.
{\bf Implications:} 
This work advances the modeling of endogenous uncertainty in multi-stage stochastic routing problems and bridges stochastic programming with learning-based approaches. The results provide guidance on when to deploy optimization-based versus learning-based methods, enabling more effective and scalable inspection planning in practice. More broadly, the framework supports risk-aware allocation of inspection resources and improved reliability of critical infrastructure systems. 
}

\KEYWORDS{dynamic inspection, multi-stage stochastic mixed-integer programming (SMIP), endogenous uncertainty, reinforcement learning, stochastic dual dynamic integer programming (SDDiP)}

\maketitle
\section{Introduction}
\label{sec:intro}

Inspection and maintenance are central to the safety and reliability of large-scale infrastructure systems. Assets such as power transmission and distribution networks, pipelines, bridges, and transportation corridors are geographically dispersed and exposed to different types of hazards, 
and thus their operators must decide when and where to allocate limited inspection resources. In practice, these decisions are often sequential: on each shift, an operator selects a subset of components to visit and then determines operational routes that respect shift duration, depot locations, and vehicle or crew capabilities~\citep{LiuDroneInspection}. 
A defining feature of many dynamic inspection problems is that inspection actions influence future system evolution. Timely inspections reduce the risk of component failures by detecting deterioration conditions and triggering mitigation. Such feedback, however, creates the so-called {\it endogenous uncertainty} of component failures, such that the distribution of future failures depends on the history of inspection decisions~\citep{shahverdi2023scheduling}.
These endogenous dynamics are particularly salient for infrastructure systems exposed to time-varying environmental stressors or cascading mechanisms, where the benefits of inspection are realized through future risk reduction. In this paper, decision dependence is modeled through the controlled state transition. The uncertainty consists of failure-time realizations sampled on a finite scenario tree. Inspection decisions then filter these realizations through completion, pre-failure inspection, and failure-propagation dynamics. Hence, for the same primitive failure-time realization, different inspection histories can lead to different realized completion and failure states.

A motivating application arises in electric power systems. Transmission and distribution assets are exposed to failures originating from aging equipment, vegetation encroachment, and extreme weather events. Such failures can propagate and lead to large service disruptions. 
Similar planning challenges arise in transportation and civil infrastructure, where ground crews and unmanned aerial systems are deployed over consecutive days to monitor bridges, railways, and pipelines under daily operational limits and evolving risk~\citep{shahverdi2023scheduling}.

The dynamic-inspection applications lie at the intersection of dynamic vehicle routing~\citep{Pillac2013AProblems}, scheduling~\citep{Memarzadeh2016IntegratedSystems}, 
and stochastic dynamic optimization~\citep{Pereira1991Multi-stagePlanning}. 
Two-stage optimization models and static routing policies cannot represent sequential adaptation when inspection choices affect subsequent failure risk. Multi-stage stochastic mixed-integer programming (SMIP)~\citep{Birge2011IntroductionProgramming,shapiro2021lectures},
provides a natural framework for modeling and solving the problem, but exact scenario-tree-indexed formulations become computationally intractable as horizons and the number of samples grow~\citep{Growe-Kuska2003ScenarioProblems,Dupacova2003ScenarioMetrics}.
To address this challenge, in this paper, we develop a {\it two-horizon representation} that separates strategic assignment decisions (which tasks to inspect in a shift) from tactical routing decisions (how to sequence visits within a shift). We further design a reinforcement learning (RL) framework that identifies job clustering structures and routing strategies through real-time interactions with simulated training samples capturing system dynamics and endogenous component failures, and we evaluate its performance against the optimization-based approach across a variety of problem instances.

\subsection{Contributions}
The main contributions we present in this paper are threefold. First, we formulate a multi-stage SMIP model that jointly optimizes assignment and route scheduling under endogenous failure risk, with a state description that tracks completed inspections, realized failures, and vehicle depot locations, and propose a two-horizon reformulation that preserves the multi-stage decision making and system dynamics, but decomposes the problem into strategic task assignment and tactical routing. 
Second, we propose a Stochastic Nested Decomposition (SND) algorithm, a variant of stochastic dual dynamic integer programming, to solve the two-horizon formulation by constructing cutting planes to approximate the cost-to-go functions and tactical cuts to approximate the routing recourse. Third, we develop a simulation-based policy-learning framework that casts the strategic horizon as a finite-horizon Markov decision process (MDP). This provides an alternative, scalable policy class and enables a systematic comparison between decomposition-based stochastic optimization and data-driven policy approximation in endogenously evolving uncertain environments.

\subsection{Structure of the Paper}
The remainder of the paper is organized as follows. Section \ref{sec:lit} reviews relevant research and positions our work in the context of recent literature. Section \ref{sec:formulation} presents the multi-stage SMIP model for dynamic inspection, and reformulating the problem as a two-horizon multi-stage problem. Section \ref{sec:snd} presents an exact solution algorithm to solve the two-horizon model. Section \ref{sec:rl} describes an alternative solution approach to the two-horizon model via RL. In Section \ref{sec:experiments}, we conduct numerical studies, and compare different models and solution approaches using instances with diverse topologies and failure configurations. Finally, in Section \ref{sec:conclu}, we conclude our work and discuss future research.

\section{Related Literature}
\label{sec:lit}

Inspection routing extends classical vehicle routing by integrating scheduling elements such as time windows, resource limits, service priorities, and coverage objectives into route design. As such, it is positioned within the broader family of vehicle routing problems (VRPs) that incorporate heterogeneous constraints and operational features beyond the canonical VRP.  
Network inspection has emerged as a distinct class of problems in which inspection decisions balance coverage, timeliness, and penalties associated with missed or delayed service \citep{borrero2018network}. Methodologically, many inspection routing models inherit algorithmic building blocks from large-scale integer programming, including branch-and-price and branch-cut-and-price frameworks \citep{barnhart1998branch,pecin2017improved}. Recent work has further emphasized practical scalability through parallelization and improved column generation strategies for solving large routing formulations \citep{yu2022improving}.

In civil and critical infrastructure systems, inspection and maintenance planning is closely tied to life-cycle management and operational resilience. Reviews of integrated infrastructure systems highlight cross-network dependencies among infrastructure components, motivating coordinated inspection prioritization and network-level response strategies \citep{saidi2018integrated}. From a monitoring and security perspective, inspection can be viewed as allocating limited sensing or monitoring resources across a network to detect adverse events, a perspective that appears in critical infrastructure monitoring and control frameworks \citep{Kyriakides2014} and in power system monitoring applications \citep{Monti2015}. This perspective also relates to the literature on informative path planning, which focuses on routing vehicles to maximize information gain in uncertain environments. Early related work in manufacturing systems similarly models inspection as an allocation problem under operational constraints and uncertainty \citep{viswanadham1996inspection}.

Inspection routing for unmanned systems introduces additional operational constraints such as endurance, charging requirements, and feasibility limits that directly affect inspection coverage decisions. Drone-based inspection models emphasize the interaction between routing, energy limits, and recharge operations \citep{VomBoegel2020,Iversen2021}. At the planning level, inspection placement and routing decisions have been studied for offshore wind farms \citep{chung2020placement} and for post-disaster inspection in power distribution networks \citep{fu2021real,lim2016multi}. A recent related contribution is the work of \citet{LiuDroneInspection}, which studies the joint problem of drone station location and routing for infrastructure inspection. Their model incorporates heterogeneous drones and solves a static location-routing formulation using adaptive large neighborhood search. While this work addresses strategic deployment and routing design, it considers a deterministic setting and does not model the dynamic evolution of infrastructure conditions or the endogenous impact of inspection actions on system risk. Moreover, \citet{EstradaInfrastructureMonitoring} study infrastructure inspection planning with routing decisions in a static setting, where inspection assignments are determined for a fixed planning horizon without modeling stochastic system evolution or endogenous failure dynamics.

From an operational perspective, dynamic inspection planning can be interpreted as a form of dynamic vehicle routing in which inspection assignments and routes are revised as system states evolve. Surveys of dynamic VRP emphasize the offline--online structure that arises when decisions must be adapted to newly revealed information \citep{rios2021recent,Ulmer2019}. In many canonical dynamic VRP settings, uncertainty is treated as exogenous (decision-independent), such as random realizations of customer requests and travel time \citep{Errico2018,Ulmer2019,ulmer2018budgeting}. In contrast, inspection settings often involve endogenous uncertainty, where inspection and maintenance actions influence the evolution of system condition and failure risk. Recent advances by \citet{kang2025bi} explore bi-parameterized recourse where initial decisions alter future feasible regions. This form of endogenous uncertainty, often referred to as Type~2 decision-dependent uncertainty (DDU), highlights the importance of modeling feedback loops in infrastructure operations \citep{shahverdi2023scheduling}.

From a methodological standpoint, multistage stochastic mixed-integer programs can be formulated as scenario-tree programs, but such formulations quickly become computationally intractable as the number of stages grows \citep{shapiro2021lectures}. Consequently, decomposition methods are central to large-scale stochastic optimization \citep{ahmed2013scenario,deng2018parallel}. In multistage settings with binary state variables, Stochastic Dual Dynamic Integer Programming (SDDiP) constructs nested lower approximations of value functions to decompose the problem by stages \citep{Zou2019StochasticProgramming}. As noted in the recent survey by \citet{seranilla2025survey}, SDDiP has become a cornerstone methodology for solving large-scale stochastic dynamic programs. To the best of our knowledge, our work represents one of the first applications of the SDDiP framework to dynamic inspection routing problems with endogenous infrastructure dynamics.

Finally, RL and learning-augmented optimization have become increasingly prominent for combinatorial optimization problems. Surveys document methodological advances in applying machine learning to structured optimization problems \citep{Mazyavkina2021}. In stochastic dynamic routing, RL is most promising when policies exploit structured state representations \citep{hildebrandt2023opportunities}. The theoretical link between multistage stochastic programming and Markov decision processes is further formalized by \citet{morton2025mdp}, providing a foundation for comparisons between optimization-based and learning-based approaches. Our RL framework enforces assignment feasibility by construction and evaluates tactical routing decisions within a simulation-based learning environment. The policy and value function are learned jointly through an actor--critic architecture using gradient-based updates. This design enables a controlled comparison between exact stochastic optimization and learned policies, clarifying trade-offs among computational scalability, interpretability, and performance under varying levels of endogenous risk.

\begin{table}[htbp!]
\TABLE{Comparison of closely related literature.\label{table:LitRevInspection}}{\small
\begin{tabular}{lcccccc}
\hline
\up\down Work & Application & Uncertainty & Type & Solution & Horizon & Routing \\
\hline
\up \citet{Ulmer2019} & VRP & DIU & -- & ADP & Multi-stage & Yes \\
\citet{Errico2018} & VRP & DIU & -- & SO & Multi-stage & Yes \\
\citet{nazari2018reinforcement} & VRP & DIU & -- & RL & Single-stage & Yes \\
\citet{LiuDroneInspection} & Inspection & Deterministic & -- & ALNS & Single-stage & Yes \\
\citet{EstradaInfrastructureMonitoring} & Inspection & DIU & -- & SD+CG & Two-stage & Yes \\
\citet{shahverdi2023scheduling} & Inspection & DDU & Type 1 & SO & Multi-stage & Partial \\
\citet{zou2018multistage} & Power systems & DIU & -- & SDDiP & Multi-stage & No \\
\citet{kang2025bi} & General SMIP & DDU & Type 2 & SO & Two-stage & No \\
\citet{morton2025mdp} & General SMIP & DDU & Type 2 & SO/MDP & Multi-stage & No \\
\down \textbf{This paper} & \textbf{Inspection} & \textbf{DDU} & \textbf{Type 2} & \textbf{SDDiP/RL} & \textbf{Two-horizon} & \textbf{Yes} \\
\hline
\end{tabular}}
{DIU: decision-independent uncertainty. DDU: decision-dependent uncertainty. Type 1: timing-based DDU. Type 2: state-distribution-based DDU. ADP: approximate dynamic programming. SO: stochastic optimization. ALNS: adaptive large neighborhood search. SD: scenario decomposition. CG: column generation. RL: reinforcement learning.}
\end{table}
Table~\ref{table:LitRevInspection} summarizes the most closely related works and highlights the modeling dimensions that distinguish our formulation. 
The comparison highlights several key distinctions. First, many routing models treat uncertainty as exogenous, whereas inspection systems often involve endogenous dynamics where inspection actions influence future failure risk. Second, existing inspection routing studies such as \citet{EstradaInfrastructureMonitoring} and \citet{LiuDroneInspection} primarily focus on static and non-adaptive settings. While these models capture operational routing constraints, they do not incorporate stochastic infrastructure deterioration or the dynamic feedback between inspection actions and future system risk. Third, while stochastic programming methods such as SDDiP have been widely applied in power systems and other infrastructure applications, their integration with routing-based inspection problems remains largely unexplored. Our work combines multistage stochastic optimization, endogenous infrastructure dynamics, and routing decisions within a unified framework. We also compare decomposition-based stochastic optimization and RL for solving dynamic inspection routing problems.

\section{Models for Stochastic Dynamic Inspection}
\label{sec:formulation}
In this modeling section, we first describe the full multi-stage stochastic dynamic programming model in Section \ref{sec:exactSDP} and then the two-horizon representation in Section \ref{sec:two_horizon}. 

\subsection{Multi-stage Stochastic Dynamic Programming Model}
\label{sec:exactSDP}

Consider a finite-horizon stochastic dynamic inspection problem defined on a directed graph $G=(N,A)$, where the node set $N$ is partitioned into a set of inspection tasks $\mathcal{T}\subset N$ and a set of depots $\mathcal{D}\subset N$. We operate a fleet of vehicles indexed by $k\in K$ over a planning horizon $t\in\{1,\dots,T\}$. The arc set $A \subseteq (\mathcal{D}\cup\mathcal{T}) \times (\mathcal{D}\cup\mathcal{T})$ contains all admissible depot–task, task–task, and task–depot arcs. We assume throughout that idle vehicles are not considered. Therefore, each vehicle can be assigned at least one inspection task at every stage. Traversal of arc $(i,j)\in A$ by vehicle $k$ requires travel time $\delta_{ijk}\ge 0$, and servicing task $i\in\mathcal{T}$ with vehicle $k$ requires processing time $\kappa_{ik}\ge 0$. Vehicle $k$ operates during a shift of fixed duration $\gamma_k>0$ and becomes available to begin its shift at time $\theta_{kt}$ in stage $t$. We encode the end-of-stage time as $E_t$ and assume that $\theta_{kt}+\gamma_k \le E_t$ for all $k\in K$ and $t\in\{1,\dots,T\}$. Without loss of generality, at most $B_{kt}$ inspection tasks can be assigned to vehicle $k$ at stage $t$.

Each task $i\in\mathcal{T}$ is subject to stochastic failure that depends on whether the inspection task has been completed. If task $i$ fails prior to inspection, a penalty $\alpha_i\ge 0$ is incurred, whereas if task $i$ remains uninspected at the end of the planning horizon, a terminal penalty $\beta_i\ge 0$ is incurred.

We represent failure time uncertainty with a scenario tree. Let $\mathcal{N}_t$ denote the set of information nodes at stage $t$, with root node $n_1\in\mathcal{N}_1$. The overall set of scenario-tree nodes is $\mathcal N = \cup_{t=1,\ldots,T} \mathcal N_t$. For any node $n\in\mathcal{N}$, let $a(n)$ denote its unique parent node, $\mathcal{C}(n)$ its set of child nodes, and $p_{nm}$ the conditional probability of transitioning from $n$ to $m\in\mathcal{C}(n)$. Let $t(n) \in \{1,\dots,T\}$ denote the stage associated with node $n$. The sequence of nodes along the unique path from the root to node $n$ is denoted by $\Pi(n)$. We denote the probability of occurrence of a node $n\in\mathcal{N}_t$ at stage $t$ as $\mathbb{P}(n)$. For node $n \in \mathcal{N}_t$, the parameter $\varphi_i^n\ge 0$ denotes the failure time of task $i \in \mathcal{T}$ if not inspected. The parameter $\bar{f}^{n}_{i} := \mathbb{I}\{\varphi^{n}_{i} \leq E_{t(n)}\}$ indicates whether the failure of task $i$ will occur by the end of stage $t(n)$. Although the uncertainty evolution and its transition probabilities are specified on a fixed finite scenario tree, the realized operational state is dependent on the decision history. In Figure~\ref{fig:ScenarioTree}, we provide an illustrative example of the scenario tree and its related notations.

\begin{figure}[htbp!]
\FIGURE{\includegraphics[width=0.55\linewidth]{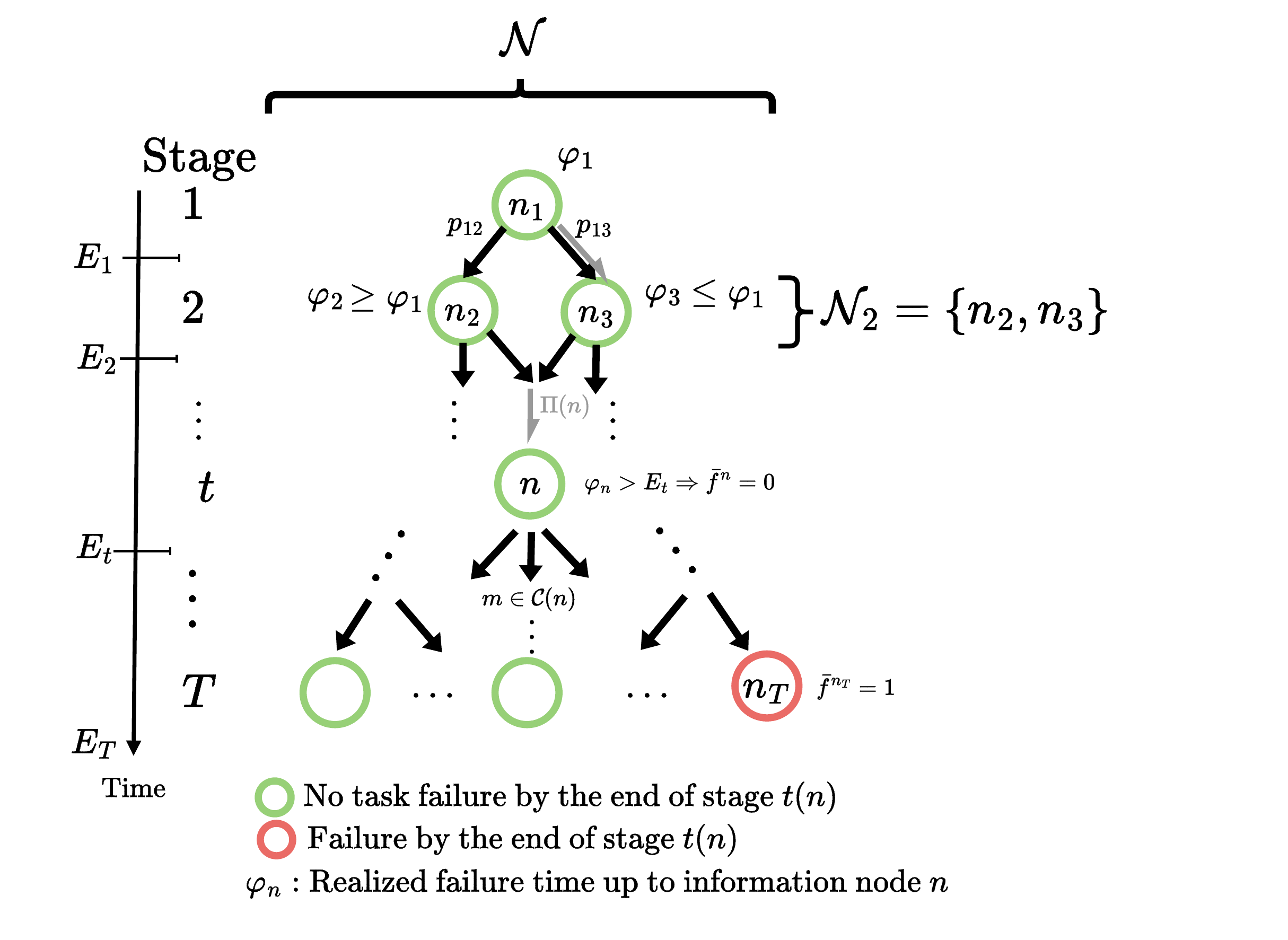}}
    {Scenario tree representation of the failure status of inspection tasks as a function of the stage. The path of gray arrows represents $\Pi(n)$. We track the elapsed time of each stage. For stages representing days, $E_t$ is measured in hours from a reference time. \label{fig:ScenarioTree}}
    {}
\end{figure}

At each node $n\in\mathcal{N}_t$, decision variables describe assignment, routing, and timing, while state variables record inspection and failure outcomes. The binary variable $x_{ik}^n\in\{0,1\}$ equals one if task $i$ is assigned to vehicle $k$ at node $n$, and zero otherwise. The binary variable $y_{ijk}^n\in\{0,1\}$ equals one if vehicle $k$ traverses arc $(i,j)$ at node $n$, and zero otherwise. The binary variables $b_{dk}^n\in\{0,1\}$ and $e_{dk}^n\in\{0,1\}$ indicate, respectively, the depot from which vehicle $k$ departs and the depot at which it completes its route within stage $t(n)$. The nonnegative variable $a_{ik}^n\ge 0$ denotes the completion time of node $i$ by vehicle $k$ at node $n$, where we set $\kappa_{dk}=0$ for all $d\in\mathcal D$ (so that $a_{dk}^n$ represents the departure time from depot $d$). For each task $i \in \mathcal T$ and scenario-tree node $n \in \mathcal N$, we also define auxiliary binary variable $v_i^n\in\{0,1\}$ such that it equals one if task $i$ has been completed by the end of stage $t(n)$ at node $n$. The variable $r_{i}^n\in\{0,1\}$ equals one if task $i$ is completed at node $n$ prior to its failure time. The variable $q_i^n\in\{0,1\}$ equals one if task $i$ is in a failed state at node $n$.

We collect the within-stage operational constraints that couple routing and timing into a single within-shift operation block. 
Considering a node $n\in\mathcal{N}$ and a task assignment decision $\mathbf{x}^n_{k}$ for each vehicle $k \in K$, we define $\mathcal{O}^n_{k}(\mathbf{x}^n_{k})$ as the set of variables $(\mathbf{y}^n_{k},\mathbf{b}^n_{k},\mathbf{e}^n_{k},\mathbf{a}^n_{k})$ satisfying:
\begin{subequations}\label{OP}
\begin{align}
& \sum_{d\in\mathcal{D}} b_{dk}^n = 1, 
\qquad \label{OP:start_depot} \\
& \sum_{d\in\mathcal{D}} e_{dk}^n = 1, 
\qquad \label{OP:end_depot} \\
& \sum_{j:(i,j)\in A} y_{ijk}^n = x_{ik}^n, 
\quad \forall i\in\mathcal{T}, 
\qquad \label{OP:task_out} \\
& \sum_{j:(j,i)\in A} y_{jik}^n = x_{ik}^n, 
\quad \forall i\in\mathcal{T}, 
\qquad \label{OP:task_in} \\
& \sum_{j:(d,j)\in A} y_{djk}^n = b_{dk}^n, 
\quad \forall d\in\mathcal{D}, 
\qquad \label{OP:depot_out} \\
& \sum_{i:(i,d)\in A} y_{idk}^n = e_{dk}^n, 
\quad \forall d\in\mathcal{D}, 
\qquad \label{OP:depot_in} \\
& a_{dk}^n \ge \theta_{k t(n)} - M(1-b_{dk}^n),
\quad \forall d\in\mathcal{D}, 
\qquad \label{OP:depot_time_lb} \\
& a_{dk}^n \le \theta_{k t(n)} + M(1-b_{dk}^n),
\quad \forall d\in\mathcal{D}, 
\qquad \label{OP:depot_time_ub} \\
& a_{jk}^n \ge a_{ik}^n + \delta_{ijk} + \kappa_{jk} - M(1-y_{ijk}^n),
\quad \forall (i,j)\in A,\; i\in(\mathcal{D}\cup\mathcal{T}),\; j\in\mathcal{T}, 
\qquad \label{OP:time_arc} \\
& a_{ik}^n \le \theta_{k t(n)} + \gamma_k + M(1-x_{ik}^n),
\quad \forall i\in\mathcal{T}, 
\qquad \label{OP:shift_task} \\
& a_{dk}^n \le \theta_{k t(n)} + \gamma_k,
\quad \forall d\in\mathcal{D}. 
\qquad \label{OP:shift_depot}
\end{align}
\end{subequations}

The operational block $\mathcal{O}^n_k(\mathbf{x}^n_k) := \left\{\mathbf{y}^n_{k},\mathbf{b}^n_{k},\mathbf{e}^n_{k},\mathbf{a}^n_{k} \ : \ \mbox{\eqref{OP:start_depot}--\eqref{OP:shift_depot}} \right\}$ enforces within-stage routing feasibility, depot consistency, temporal propagation, and shift-duration limits for vehicle $k$ at node $n$. Constraints~\eqref{OP:start_depot} and~\eqref{OP:end_depot} require that each vehicle selects exactly one departure depot and one return depot during stage $t(n)$. Constraints~\eqref{OP:task_out}--\eqref{OP:task_in} impose standard flow-conservation conditions for assigned tasks: if $x_{ik}^n=1$, exactly one outgoing and one incoming arc must be selected for task $i$, whereas if $x_{ik}^n=0$, no arcs incident to $i$ may be used. 
Constraints~\eqref{OP:depot_out}--\eqref{OP:depot_in} link the routing variables to the depot indicators by requiring that the selected start depot has exactly one outgoing arc and the selected end depot has exactly one incoming arc. Constraints~\eqref{OP:depot_time_lb}--\eqref{OP:depot_time_ub} anchor the time scale of the route by enforcing that the completion time at the chosen start depot equals the vehicle’s shift start time $\theta_{k t(n)}$. 
Constraint~\eqref{OP:time_arc} propagates completion times along selected arcs: since $a_{ik}^n$ represents completion time at node $i$, the completion time at node $j$ must be at least the completion time at $i$ plus travel time $\delta_{ijk}$ and service time $\kappa_{jk}$ whenever arc $(i,j)$ is traversed. 
Constraints~\eqref{OP:shift_task} and~\eqref{OP:shift_depot} enforce shift feasibility by requiring that all completion times remain within the shift window $[\theta_{k t(n)},\,\theta_{k t(n)}+\gamma_k]$.

\remark
The time-propagation constraints~\eqref{OP:depot_time_lb}--\eqref{OP:time_arc}, together with constraints~\eqref{OP:task_out}--\eqref{OP:task_in}, eliminate subtours that do not include the start depot. Any directed cycle of selected arcs among tasks would induce a strictly increasing sequence of completion times around the cycle, which is infeasible provided that $\delta_{ijk}+\kappa_{jk}>0$ on any selected arc. Thus, the completion-time variables induce an elimination mechanism without requiring additional ordering variables.
\endremark

Equivalently, given a task assignment $\mathbf{x}^n_k$ at scenario node $n$, we define for each vehicle $k \in K$ the feasible operational set \begin{equation}
\mathcal{O}^n_k(\mathbf{x}^n_k) =
\left\{ (\mathbf{y}^n_{k},\mathbf{b}^n_{k},\mathbf{e}^n_{k},\mathbf{a}^n_{k}) \;:\;
\mbox{\eqref{OP:start_depot}--\eqref{OP:shift_depot}} \right\}.
\end{equation}
We are ready to formulate the dynamic inspection problem as the following scenario-tree-indexed extended mixed-integer linear program:
\begin{subequations}\label{EXT}
\begin{align}
\min_{\substack{\mathbf{x},\mathbf{y},\mathbf{b},\mathbf{e},\\\mathbf{v},\mathbf{r},\mathbf{q},\mathbf{a}}}
& \sum_{t=1}^{T} \sum_{n\in\mathcal{N}_t} \mathbb{P}(n)
  \sum_{k\in K}\sum_{(i,j)\in A} \delta_{ijk}\,y_{ijk}^n
  + \sum_{n\in\mathcal{N}_T} \mathbb{P}(n)
    \sum_{i\in\mathcal{T}}\bigl(\alpha_i q_i^n + \beta_i(1-v_i^n)\bigr)
\label{EXT:obj} \\
\text{s.t.}\quad
& (\mathbf{y}^n_{k},\mathbf{b}^n_{k},\mathbf{e}^n_{k},\mathbf{a}^n_{k})\in \mathcal{O}^n_k(\mathbf{x}^n_k),
\quad \forall k \in K,\ \forall n\in\mathcal{N}, 
\label{EXT:Oblock}  \\
& \sum_{i\in\mathcal{T}} x_{ik}^n \le B_{k t(n)}, 
\quad \forall k\in K,\ \forall n\in\mathcal{N}, 
\label{EXT:budget} \\
& \sum_{k\in K} x_{ik}^n \le 1 - v_i^{a(n)}, 
\quad \forall i\in\mathcal{T},\ \forall n\notin\mathcal{N}_1, 
\label{EXT:once} \\
& v_{i}^{n_1} = 0,\quad \forall i \in \mathcal{T}, 
\label{EXT:init_v}\\
& q_{i}^{n_1} = 0,\quad \forall i \in \mathcal{T},
\label{EXT:init_q}\\
& v_i^n = v_i^{a(n)} + \sum_{k\in K} x_{ik}^n, 
\quad \forall i\in\mathcal{T},\ \forall n\notin\mathcal{N}_1, 
\label{EXT:v_update} \\
& r_i^n \le \sum_{k\in K} x_{ik}^n, 
\quad \forall i\in\mathcal{T},\ \forall n\in\mathcal{N},
\label{EXT:r_assign_agg} \\
& r_i^n \le 1-q_{i}^{a(n)}, 
\quad \forall i\in\mathcal{T},\ \forall n\notin\mathcal{N}_1,
\label{EXT:r_fail_agg} \\
& a_{ik}^n \le \varphi_i^n + M(1-r_i^n) + M(1-x_{ik}^n), 
\quad \forall i\in\mathcal{T},\ \forall k\in K,\ \forall n\in\mathcal{N},
\label{EXT:r_time_agg}\\
& q_i^n \ge q_i^{a(n)}, 
\quad \forall i\in\mathcal{T},\ \forall n\notin\mathcal{N}_1, 
\label{EXT:q_monotone} \\
& q_i^n \ge \bar{f}_i^{n} - \bigl(v_i^{a(n)} - q_i^{a(n)}\bigr) - r_i^n, 
\quad \forall i\in\mathcal{T},\ \forall n\notin\mathcal{N}_1, 
\label{EXT:q_fail_clean} \\
& b_{dk}^n = e_{dk}^{a(n)},
\quad \forall d\in\mathcal{D},\ \forall k\in K,\ \forall n\notin\mathcal{N}_1, 
\label{EXT:carry} \\
& x_{ik}^n\in\{0,1\},
\quad \forall i\in\mathcal{T},\ \forall k\in K,\ \forall n\in\mathcal{N}, 
\label{EXT:bin_x} \\
& y_{ijk}^n\in\{0,1\},
\quad \forall (i,j)\in A,\ \forall k\in K,\ \forall n\in\mathcal{N}, 
\label{EXT:bin_y} \\
& b_{dk}^n,e_{dk}^n\in\{0,1\},
\quad \forall d\in\mathcal{D},\ \forall k\in K,\ \forall n\in\mathcal{N}, 
\label{EXT:bin_depot} \\
& v_i^n,q_i^n,r_i^n\in\{0,1\},
\quad \forall i\in\mathcal{T},\ \forall n\in\mathcal{N}, 
\label{EXT:bin_state} \\
& a_{ik}^n \ge 0,
\quad \forall i\in(\mathcal{T}\cup\mathcal{D}),\ \forall k\in K,\ \forall n\in\mathcal{N}.
\label{EXT:cont}
\end{align}
\end{subequations}

The objective~\eqref{EXT:obj} minimizes expected total cost over the scenario tree by accumulating expected travel time across all nodes and stages, as well as assessing terminal penalties at leaf nodes for tasks that have failed or left uninspected. Constraint~\eqref{EXT:Oblock} enforces within-stage routing-and-timing feasibility at each node via the local constraint block $\mathcal{O}^n_k(\mathbf{x}^n_k)$, i.e., constraints~\eqref{OP:start_depot}--\eqref{OP:shift_depot}. Constraint~\eqref{EXT:budget} limits the number of tasks assigned to each vehicle at each node, and constraint~\eqref{EXT:once} prevents re-inspection by prohibiting assignments to tasks completed at an ancestor node. Constraints~\eqref{EXT:init_v}--\eqref{EXT:init_q} initialize the task completion and failure indicators at the root. Constraint~\eqref{EXT:v_update} updates the completion indicator at node $n$ using the ancestor completion status and the current assignment decision. Constraints~\eqref{EXT:r_assign_agg} and~\eqref{EXT:r_fail_agg} link $r_i^n$ to assignment and rule out pre-failure completion if the task has already failed at the parent node, respectively. Constraint~\eqref{EXT:r_time_agg} enforces the pre-failure timing logic by requiring that if $r_i^n=1$ and vehicle $k$ is assigned to inspect task $i$ at node $n$, then the completion time satisfies $a_{ik}^n\le \varphi_i^n$. Constraint~\eqref{EXT:q_monotone} enforces failure propagation by requiring that once a task fails it remains failed at all descendants. Constraint~\eqref{EXT:q_fail_clean} triggers failure at node $n$ whenever $\bar{f}_i^n=1$ and neither (i) the task was already completed and not failed at the parent node (captured by $v_i^{a(n)}-q_i^{a(n)}$) nor (ii) a pre-failure completion occurs at node $n$ ($r_i^n=1$). Constraint~\eqref{EXT:carry} enforces depot carryover by requiring each vehicle to start in stage $t(n)$ at the depot where it ended at node $a(n)$. Constraints~\eqref{EXT:bin_x}--\eqref{EXT:cont} specify variable domains.

\subsection{Two-Horizon Representation}
\label{sec:two_horizon}

The scenario-tree-indexed formulation~\eqref{EXT} is computationally challenging, as it couples long-horizon state evolution driven by endogenous task failures with within-stage multi-vehicle routing and scheduling decisions. We further propose a two-horizon representation that separates \emph{strategic} assignment decisions, which govern state transitions across stages of the scenario tree, from \emph{tactical} within-stage operational decisions. The key feature of our representation is that, conditional on an assignment and inherited vehicle start depots, the tactical routing-and-timing decisions decompose by vehicle and can therefore be solved in parallel. Consider a node $n\in\mathcal{N}_t$ in stage $t$ with parent node $a(n)$. We define the inherited state at node $n$ as
$\mathbf{s}^{a(n)} := \bigl(\mathbf{v}^{a(n)}, \mathbf{q}^{a(n)}, \mathbf{e}^{a(n)}\bigr)$,
where $\mathbf{v}^{a(n)}$ and $\mathbf{q}^{a(n)}$ denote the task completion and failure indicators at the parent node, and $\mathbf{e}^{a(n)}$ encodes the end-depot indicators from node $a(n)$ (equivalently, the start-depot indicators for stage $t(n)$ via~\eqref{EXT:carry}). Let $V^n(\mathbf{s}^{a(n)})$ denote the optimal expected cost-to-go from node $n$ given the inherited state $\mathbf{s}^{a(n)}$. For any node $n\in\mathcal{N}_t$ with $t<T$, the Bellman recursion is
\begin{equation}\label{eq:bellman_two_horizon}
V^n(\mathbf{s}^{a(n)})
= \min_{\mathbf{x}^n \in \mathcal{X}(\mathbf{s}^{a(n)})}
\Biggl\{
\mathcal{R}^n\!\left(\mathbf{x}^n;\mathbf{s}^{a(n)}\right) 
+ \sum_{m\in\mathcal{C}(n)} p_{nm}\, V^m(\mathbf{s}^n)
\Biggr\},
\end{equation}
where $\mathbf{x}^n=\{x_{ik}^n\}_{i\in\mathcal{T},k\in K}$ is the strategic assignment decision at node $n$, and $\mathcal{X}(\mathbf{s}^{a(n)})$ is the set of feasible assignments satisfying~\eqref{EXT:budget}--\eqref{EXT:once} given the inherited completion state $\mathbf{v}^{a(n)}$. 
The post-decision state $\mathbf{s}^n=(\mathbf{v}^n,\mathbf{q}^n,\mathbf{e}^n)$ is induced by the tactical routing solutions and the state-transition constraints~\eqref{EXT:v_update}--\eqref{EXT:q_fail_clean}, as described below.

For fixed $\mathbf{x}^n$ and $\mathbf{e}^{a(n)}$, the routing-and-timing decisions are separable by vehicle. For each vehicle $k\in K$, define the per-vehicle tactical routing problem
\begin{subequations}\label{REC}
\begin{align}
\mathcal{R}_{k}^n(\mathbf{x}_{k}^n; \mathbf{e}_{k}^{a(n)})
:=\quad&
\min_{\mathbf{y}^n_{k},\mathbf{b}^n_{k},\mathbf{e}^n_{k},\mathbf{a}^n_{k}}
\ \ \sum_{(i,j)\in A} \delta_{ijk}\, y_{ijk}^n
\label{REC:obj} \\
\text{s.t.}\quad
& (\mathbf{y}^n_{k},\mathbf{b}^n_{k},\mathbf{e}^n_{k},\mathbf{a}^n_{k})
\in \mathcal{O}^n_k(\mathbf{x}^n_k),
\label{REC:Oblock} \\
& b_{dk}^n = e_{dk}^{a(n)},
\quad \forall d\in\mathcal{D},
\label{REC:start_fix} \\
& y_{ijk}^n\in\{0,1\},
\quad \forall (i,j)\in A, \\
& b_{dk}^n\in\{0,1\},
\quad \forall d\in\mathcal{D}, \\
& e_{dk}^n\in\{0,1\},
\quad \forall d\in\mathcal{D}, \\
& a_{ik}^n \ge 0,
\quad \forall i\in(\mathcal{T}\cup\mathcal{D}).
\end{align}
\end{subequations}
Here $\mathbf{x}_k^n:=\{x_{ik}^n\}_{i\in\mathcal{T}}$ and $\mathbf{e}_k^{a(n)}:=\{e_{dk}^{a(n)}\}_{d\in\mathcal{D}}$ denote vehicle $k$'s assigned tasks and inherited start-depot indicator, respectively. Constraint~\eqref{REC:start_fix} enforces depot carryover from node $a(n)$ to node $n$ by fixing the start-depot choice. The operational block $\mathcal{O}^n_k(\mathbf{x}_k^n)$ enforces within-stage routing feasibility, time propagation, and shift feasibility via~\eqref{OP:start_depot}--\eqref{OP:shift_depot}. Since~\eqref{REC} is defined separately for each vehicle $k$ and depends only on $(\mathbf{x}_k^n,\mathbf{e}_k^{a(n)})$, the $|K|$ problems~\eqref{REC} can be solved independently and in parallel. We define the tactical recourse as the sum of optimal per-vehicle routing costs:
\begin{equation}
\mathcal{R}^n\!\left(\mathbf{x}^n;\mathbf{s}^{a(n)}\right)
:= \sum_{k\in K} 
\mathcal{R}_k^n\!\left(\mathbf{x}_k^n;\mathbf{e}_k^{a(n)}\right).
\label{eq:Rdef}
\end{equation}
If any per-vehicle routing problem~\eqref{REC} is infeasible, then 
$\mathcal{R}^n(\mathbf{x}^n;\mathbf{s}^{a(n)})=+\infty$. Let $\{\mathbf{e}_k^n\}_{k\in K}$ and $\{a_{ik}^n\}_{i\in\mathcal{T},k\in K}$ denote the optimal solutions of~\eqref{REC}. 
We construct the post-decision state $\mathbf{s}^n=(\mathbf{v}^n,\mathbf{q}^n,\mathbf{e}^n)$ as follows. (i) We set depot carryover via $\mathbf{e}^n := \{\mathbf{e}_k^n\}_{k\in K}$. (ii) We update the task completion indicators via~\eqref{EXT:v_update}. (iii) We determine the failure and pre-failure completion indicators $(\mathbf{q}^n,\mathbf{r}^n)$ according to constraints~\eqref{EXT:r_assign_agg}--\eqref{EXT:r_time_agg} and~\eqref{EXT:q_monotone}--\eqref{EXT:q_fail_clean}, using the completion times returned by~\eqref{REC}. By construction, the resulting state $\mathbf{s}^n$ coincides with the state that would be obtained in the extended formulation~\eqref{EXT} under the same assignment and routing decisions.

\paragraph{Terminal value.}
After the stage-$T$ tactical solution induces $(\mathbf{v}^n,\mathbf{q}^n)$, the terminal value at leaf nodes is
\begin{equation}\label{TERM}
V^n(\mathbf{s}^{a(n)}) =
\sum_{i\in\mathcal{T}}\bigl(\alpha_i q_i^{n} + \beta_i(1-v_i^{n})\bigr),
\qquad \forall n \in \mathcal{N}_T.
\end{equation}

We provide Figure~\ref{fig:Representations} to illustrate the relationship between the multi-stage tree-indexed formulation and the two-horizon representation, emphasizing that the second-horizon tactical routing problems are separable by vehicle.
\begin{figure}[htbp!]
\FIGURE{
\subcaptionbox{Multi-stage representation. Each node of the scenario tree carries both strategic assignment decisions and tactical routing-and-timing decisions.}{%
\includegraphics[width=0.43\columnwidth]{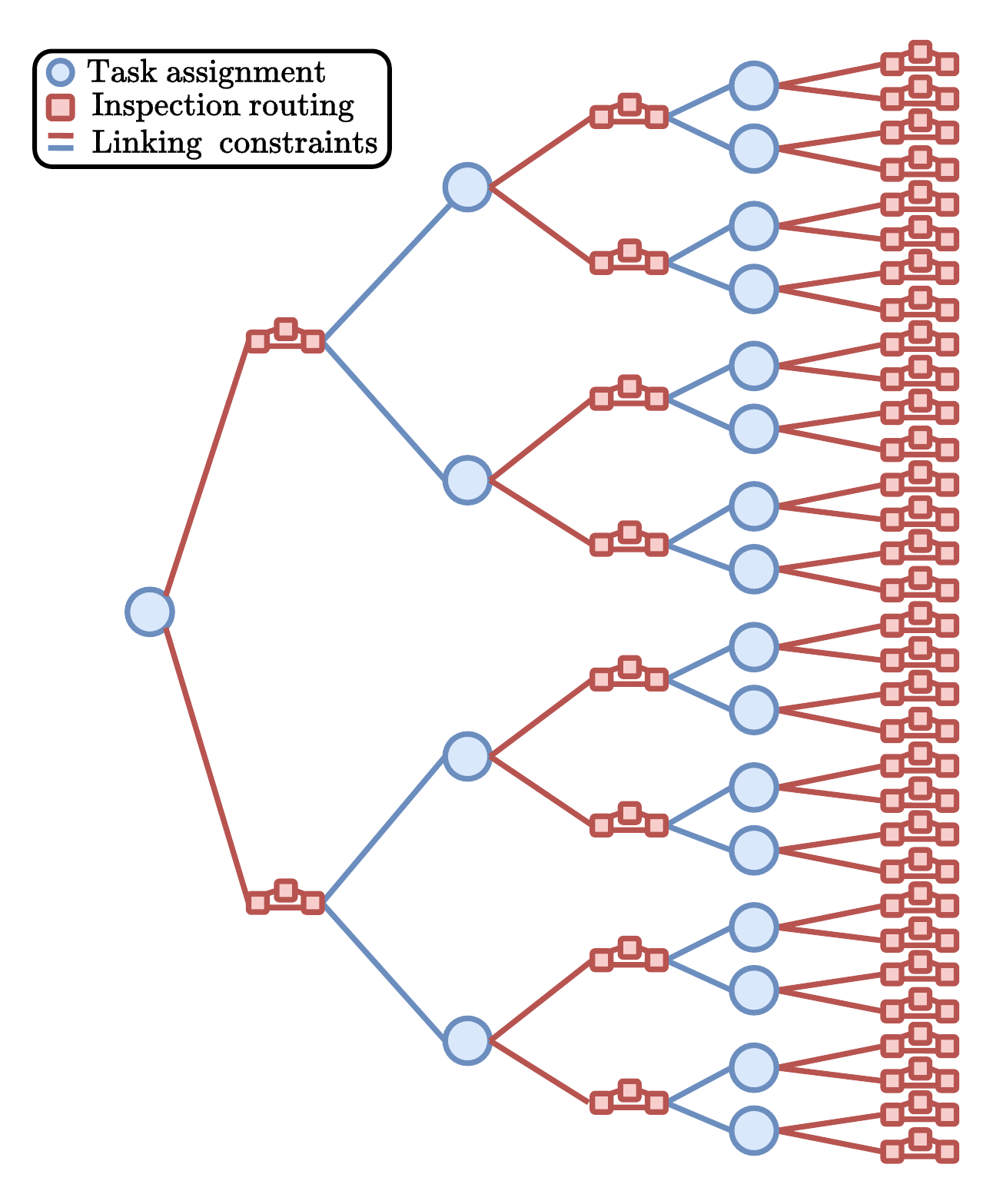}}
\hfill
\subcaptionbox{Two-horizon representation. Strategic assignment decisions are made first, and tactical routing problems are solved conditionally and vehicle-separably.}{
\includegraphics[width=0.41\columnwidth]{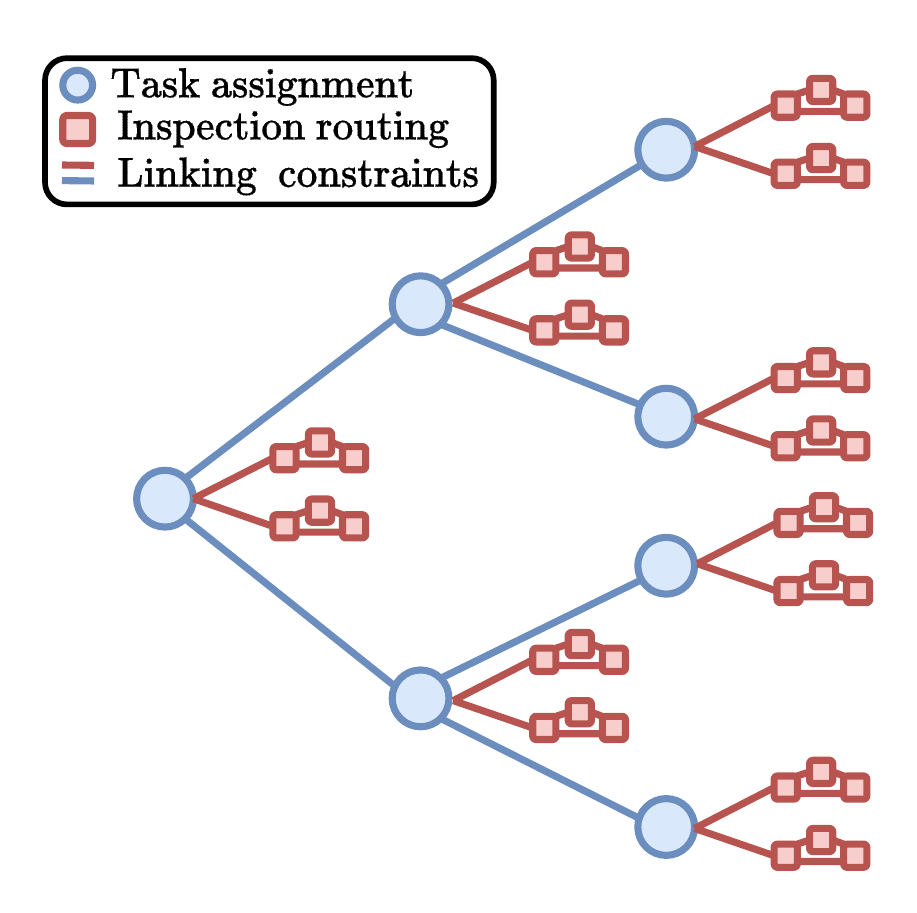}}}
{Equivalent representations of the dynamic inspection problem. In the two-horizon representation, the second-horizon tactical problems are vehicle-separable and can be solved in parallel.\label{fig:Representations}}
{}
\end{figure}

\remark
The two-horizon representation is exact: solving the dynamic program~\eqref{eq:bellman_two_horizon} with recourse~\eqref{REC} and terminal values~\eqref{TERM} recovers the optimal objective value of the tree-indexed formulation~\eqref{EXT}. Its value is structural, as it isolates within-stage operational feasibility and failure timing into a recourse mapping for fixed assignments and inherited vehicle locations, thereby enabling decomposition and approximation methods that treat strategic state transitions separately from tactical routing decisions.
\endremark

\section{Stochastic Nested Decomposition (SND)}
\label{sec:snd}

To solve the two-horizon multi-stage stochastic optimization problem implied by~\eqref{eq:bellman_two_horizon}, we propose a Stochastic Nested Decomposition (SND) algorithm. The method follows the forward--backward structure of SDDiP~\citep{Zou2019StochasticProgramming}: a forward simulation pass generates decisions along sampled scenario paths using current cut-based under-estimators, and a backward pass constructs cutting-plane outer approximations of cost-to-go functions to tighten node-wise lower bounds.
In our two-horizon setting, we approximate two objects:
(i) the strategic value functions $V^m(\cdot)$ over binary post-decision states, and
(ii) the tactical routing recourse $\mathcal{R}^n(\cdot\,;\cdot)$ that maps an assignment and inherited depots to the routing cost. Accordingly, the backward pass maintains two families of cut pools: strategic cut pools $\mathcal{H}(m)$ that approximate $V^m(\cdot)$, and tactical cut pools $\mathcal{G}(n)$ that approximate $\mathcal{R}^n(\cdot\,;\cdot)$.

\subsection{Assumptions}
\label{subsec:snd_assumptions}

We state structural assumptions used to interpret the two-horizon problem as an exact dynamic program and to support the finite-convergence argument.
\begin{assumption}[Exact two-horizon dynamic program and sufficient state\label{ass:snd_markov}]
For each node $n$ in the scenario tree, the inherited state $s^{a(n)}:=(v^{a(n)},q^{a(n)},e^{a(n)})$ is sufficient in the following sense:
conditional on $(s^{a(n)},x^n)$ and the information revealed at node $n$, within-stage tactical feasibility, optimal routing cost, and the induced post-decision state depend on the past only through $(x^n,e^{a(n)})$. Consequently, the Bellman recursion~\eqref{eq:bellman_two_horizon} is exact.
\end{assumption}

\begin{assumption}[Finite tree and finite discrete domains\label{ass:snd_finite_sets}]
The scenario tree is finite. At each node $n$, the feasible strategic assignment set $\mathcal X(s^{a(n)})$ is finite and the inherited state space $\mathcal S^n$ is finite.
\end{assumption}

\begin{assumption}[Sampling\label{ass:snd_sampling}]
The sampling scheme visits every scenario-tree node with positive probability in each iteration (e.g., paths are sampled with replacement from the finite tree).
\end{assumption}

\subsection{Lower-bounding approximations and the forward pass}
\label{subsec:snd_lb_forward}

SND constructs {lower-bounding} approximations of (i) the strategic value functions and (ii) the tactical routing recourse. For each node $m$, let $\underline V^m(\cdot)$ denote the current under-estimator of $V^m(\cdot)$ represented by the strategic cut pool $\mathcal H(m)$:
\begin{equation}
\underline V^m(s)\;=\;\max_{\ell\in\mathcal{H}(m)}\left\{\pi_{\ell,m}^\top s+\alpha_{\ell,m}\right\}.
\label{eq:V_under_def}
\end{equation}
Since the tactical recourse depends on both the assignment and the inherited depot indicators, we maintain an under-estimator of the form
\begin{equation}
\underline{\mathcal R}^n(x;e)\;=\;\max_{\ell\in\mathcal{G}(n)}\left\{\psi_{\ell,n}^\top x+\xi_{\ell,n}^\top e+\phi_{\ell,n}\right\}.
\label{eq:R_under_def}
\end{equation}
In the forward pass, given inherited state $s^{a(n)}$, the strategic decision $x^n$ is computed by solving the approximate Bellman recursion in which $(V^m,\mathcal R^n)$ are replaced by $(\underline V^m,\underline{\mathcal R}^n)$:
\begin{equation}\label{eq:snd_approx_bellman}
\underline{V}^n(s^{a(n)})=
\min_{x^n\in \mathcal X(s^{a(n)})}
\left\{
\underline{\mathcal R}^n(x^n;e^{a(n)})+\sum_{m\in\mathcal C(n)}p_{nm}\,\underline V^m(s^n)
\right\}.
\end{equation}
For SND cut generation and convergence guarantees, the post-decision state $s^n$ is induced by the optimal tactical evaluation under $(x^n,e^{a(n)})$ followed by the state-transition logic~\eqref{EXT:v_update}--\eqref{EXT:q_fail_clean}. We also report a {statistical upper bound} by Monte Carlo evaluation of an incumbent feasible policy over sampled root-to-leaf paths.

\subsubsection{Integer optimality cuts as tight Lagrangian cuts}
\label{subsec:integer_opt_cuts}
We use integer optimality cuts to obtain cuts that are (i) globally valid lower bounds and (ii) tight at the binary point used for cut generation. The construction is standard in SDDiP-type methods for mixed-integer dynamic programs.
Let $\mathcal{Q}(z)$ denote a generic value function defined over a binary vector $z\in\{0,1\}^d$, and let $\underline{\mathcal{Q}}(z)$ be any valid lower approximation of $\mathcal{Q}(z)$ (e.g., a current Lagrangian dual value or current cutting-plane under-estimator). Fix a binary point $\hat z\in\{0,1\}^d$ and any scalar lower bound $L\le \mathcal{Q}(z)$ valid for all $z\in\{0,1\}^d$. Define
\begin{align}
\pi(\hat z;L)
&:=\big(\underline{\mathcal{Q}}(\hat z)-L\big)\,(2\hat z-\mathbf 1),
\label{eq:ioc_pi}\\
\omega(\hat z;L)
&:=\underline{\mathcal{Q}}(\hat z)-\big(\underline{\mathcal{Q}}(\hat z)-L\big)\,\mathbf 1^\top \hat z.
\label{eq:ioc_omega}
\end{align}
Then the following affine inequality is the integer optimality cut at $\hat z$:
\begin{equation}
\underline{\mathcal{Q}}(z)\ \ge\ \pi(\hat z;L)^\top z + \omega(\hat z;L),
\qquad \forall z\in\{0,1\}^d.
\label{eq:ioc_cut}
\end{equation}

\begin{proposition}[Integer optimality cuts\label{prop:integer-cut}]
For any binary $\hat z\in\{0,1\}^d$ and any valid global lower bound $L\le \mathcal{Q}(z)$, the integer optimality cut~\eqref{eq:ioc_cut} is valid on $\{0,1\}^d$; i.e., it holds for all $z\in\{0,1\}^d$.
\end{proposition}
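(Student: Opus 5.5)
The plan is to rewrite the right-hand side of \eqref{eq:ioc_cut} in terms of the Hamming distance between $z$ and $\hat z$, and then split into the cases $z=\hat z$ and $z\neq\hat z$. Throughout, write $U:=\underline{\mathcal Q}(\hat z)$. I will read the left-hand side of \eqref{eq:ioc_cut} as the value function being under-estimated, so the target inequality is $\mathcal Q(z)\ge \pi(\hat z;L)^\top z+\omega(\hat z;L)$ for every binary $z$. The only facts I will use are $U\le\mathcal Q(\hat z)$, since $\underline{\mathcal Q}$ is a valid lower approximation, and $L\le \mathcal Q(z)$ for all binary $z$.

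\emph{Step 1 (algebraic identity).} Substituting \eqref{eq:ioc_pi}--\eqref{eq:ioc_omega} gives
\[
\pi(\hat z;L)^\top z+\omega(\hat z;L)=U+(U-L)\sum_{j=1}^d\bigl[(2\hat z_j-1)z_j-\hat z_j\bigr].
\]
I then check the bracket coordinatewise. If $\hat z_j=1$ it equals $z_j-1$, and if $\hat z_j=0$ it equals $-z_j$. In both cases it is $-|z_j-\hat z_j|$ because $z_j\in\{0,1\}$. Hence the right-hand side equals $U-(U-L)\,h(z)$, where $h(z):=\|z-\hat z\|_1\in\{0,1,\dots,d\}$ is the Hamming distance.

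\emph{Step 2 (case $z=\hat z$).} Here $h(z)=0$, so the cut value is $U\le\mathcal Q(\hat z)$. This also shows the cut is tight at $\hat z$ relative to $\underline{\mathcal Q}$.

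\emph{Step 3 (case $z\neq\hat z$).} Here $h(z)\ge1$. Provided $U-L\ge0$, the cut value satisfies $U-(U-L)h(z)\le U-(U-L)=L\le \mathcal Q(z)$, which is the desired inequality.

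The only delicate point is the sign of $U-L$, and I expect this to be the main obstacle. If $U<L$, the coefficient $U-L$ is negative and the cut value grows with $h(z)$, so the argument above fails. I would handle this by observing that one may assume $U\ge L$ without loss of generality. The constant $L$ is itself a valid lower bound at $\hat z$, so $\max\{U,L\}$ is also a valid lower approximation of $\mathcal Q(\hat z)$. Using $\max\{U,L\}$ in place of $U$ in \eqref{eq:ioc_pi}--\eqref{eq:ioc_omega} restores $U-L\ge 0$, and this is also the convention in \citet{Zou2019StochasticProgramming}. With that convention, Steps 2 and 3 together give validity on all of $\{0,1\}^d$.
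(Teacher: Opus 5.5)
Your proof is correct and follows the paper's argument: the paper likewise rewrites the cut as $\underline{\mathcal{Q}}(\hat z)-(\underline{\mathcal{Q}}(\hat z)-L)\|z-\hat z\|_1$ and invokes $\underline{\mathcal{Q}}(\hat z)\ge L$. Your case split $z=\hat z$ versus $z\neq\hat z$, together with your reading of the left-hand side as $\mathcal{Q}(z)$, supplies the justification that the paper's one-line proof leaves implicit. One caveat: the paper simply asserts $\underline{\mathcal{Q}}(\hat z)\ge L$ even though it is not among the stated hypotheses, and the condition is genuinely needed (for $U<L$ and $\|z-\hat z\|_1\ge 2$ the literal cut can exceed $\mathcal{Q}(z)$), so your $\max\{U,L\}$ substitution is best viewed as an added convention that changes the cut, not a true without-loss-of-generality step.
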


\proof{Proof.}
For any $z\in\{0,1\}^d$,
\[
\|z-\hat z\|_1
=\mathbf 1^\top z+\mathbf 1^\top \hat z-2\hat z^\top z.
\]
Since $\underline{\mathcal{Q}}(\hat z)\ge L$, we have
\[
\underline{\mathcal{Q}}(z)\ \ge\ \underline{\mathcal{Q}}(\hat z)
-\big(\underline{\mathcal{Q}}(\hat z)-L\big)\,\|z-\hat z\|_1,
\]
which is precisely~\eqref{eq:ioc_cut} after substituting the expression of the $\ell_1$-distance and regrouping terms using~\eqref{eq:ioc_pi}--\eqref{eq:ioc_omega}.
\Halmos
\endproof

Beyond validity, the integer optimality cut is a {tight Lagrangian cut} at the binary point $\hat z$ in the sense that it matches the under-estimator value at $\hat z$ and can be interpreted as an optimal supporting hyperplane of a Lagrangian dual function at $\hat z$.

\begin{proposition}[Integer optimality cuts are tight Lagrangian cuts\label{prop:integer-cut-tight}]
For any binary $\hat z\in\{0,1\}^d$ and any valid global lower bound $L$, the integer optimality cut~\eqref{eq:ioc_cut} is tight at $\hat z$, i.e.,
\begin{equation}
\underline{\mathcal{Q}}(\hat z)=\pi(\hat z;L)^\top \hat z + \omega(\hat z;L).
\label{eq:ioc_tight}
\end{equation}
Moreover, when $\underline{\mathcal{Q}}(\hat z)$ is obtained from a Lagrangian dual relaxation of $\mathcal{Q}(\cdot)$, the slope vector $\pi(\hat z;L)$ corresponds to a maximizer of the dual at $\hat z$, and the intercept $\omega(\hat z;L)$ equals the associated Lagrangian function value.
\end{proposition}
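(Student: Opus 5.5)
The tightness identity~\eqref{eq:ioc_tight} follows by direct substitution into the definitions~\eqref{eq:ioc_pi}--\eqref{eq:ioc_omega}. Write $c:=\underline{\mathcal{Q}}(\hat z)-L\ge 0$. Then
\[
\pi(\hat z;L)^\top \hat z+\omega(\hat z;L)=c\,(2\hat z-\mathbf 1)^\top\hat z+\underline{\mathcal{Q}}(\hat z)-c\,\mathbf 1^\top\hat z .
\]
Because $\hat z$ is binary, $\hat z^\top\hat z=\mathbf 1^\top\hat z$, so $(2\hat z-\mathbf 1)^\top\hat z=\mathbf 1^\top\hat z$ and the two $c$-terms cancel, which leaves $\underline{\mathcal{Q}}(\hat z)$. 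Equivalently, the cut in the proof of Proposition~\ref{prop:integer-cut} is $\underline{\mathcal{Q}}(\hat z)-c\|z-\hat z\|_1$, and this distance term vanishes at $z=\hat z$. Together with the validity already established in Proposition~\ref{prop:integer-cut}, the cut is a valid minorant of $\underline{\mathcal{Q}}$ on $\{0,1\}^d$ that touches it at $\hat z$.

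For the Lagrangian interpretation, I will follow the standard SDDiP construction of \citet{Zou2019StochasticProgramming}. Introduce a local copy $w$ of the state, constrained by $w=z$, with $w$ relaxed to $[0,1]^d$. Dualize the copy constraint with multiplier $\pi$ to obtain
\[
\mathcal{L}(\pi;\hat z)=\min_{z\in\{0,1\}^d}\{\mathcal{Q}(z)-\pi^\top z\}+\pi^\top\hat z .
\]
Any $\pi$ then yields the affine minorant $\pi^\top z+\min_{z'}\{\mathcal{Q}(z')-\pi^\top z'\}$, and tightness at $\hat z$ is equivalent to $\pi$ attaining the dual value $\max_\pi\mathcal{L}(\pi;\hat z)=\mathcal{Q}(\hat z)$; here $\mathcal{Q}$ is read as $\underline{\mathcal{Q}}$ when the latter is the dual value.

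I then check that $\pi(\hat z;L)$ is such a maximizer. The minorant property shows $\mathcal{Q}(z)-\pi(\hat z;L)^\top z\ge\omega(\hat z;L)$ for all binary $z$. Tightness shows this bound is attained at $z=\hat z$, so the inner minimum equals $\omega(\hat z;L)$ exactly. Hence $\mathcal{L}(\pi(\hat z;L);\hat z)=\omega+\pi^\top\hat z=\underline{\mathcal{Q}}(\hat z)$. This is the largest possible dual value, because the dual is bounded above by the primal value at $\hat z$ (weak duality). So $\pi(\hat z;L)$ is a dual maximizer, and the intercept $\omega$ equals the inner Lagrangian minimum, i.e., the associated Lagrangian function value.

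The main obstacle is not computational. It is making precise which function the dual is taken of. Validity uses $\underline{\mathcal{Q}}$, whereas the dual is naturally stated for $\mathcal{Q}$, and the attainment argument needs the minorant to hold for the same function whose value at $\hat z$ is the dual optimum. I would handle this by stating the dual relaxation for $\underline{\mathcal{Q}}$ itself, or by noting that when $\underline{\mathcal{Q}}(\hat z)=\mathcal{Q}(\hat z)$ (the exact binary-state case of SDDiP, where Lagrangian cuts are tight), the argument applies verbatim.
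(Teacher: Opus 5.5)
Your proposal is correct and follows the paper's route. The tightness identity is the same substitution using $(2\hat z-\mathbf 1)^\top\hat z=\mathbf 1^\top\hat z$, and your minorant-plus-attainment-plus-weak-duality argument is an explicit version of the paper's one-line appeal to the cut being a supporting hyperplane of the Lagrangian dual at $\hat z$; it also makes visible that this needs $\underline{\mathcal{Q}}(\hat z)=\mathcal{Q}(\hat z)$ (or the dual taken of $\underline{\mathcal{Q}}$ itself), a point the paper glosses over.

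One caution: the inner minimization must range over binary copies, as in your displayed $\mathcal{L}(\pi;\hat z)$ and the paper's $\mathcal{L}^{s}_m$ with $\tilde{\mathbf{s}}^m\in\{0,1\}$, not over $[0,1]^d$ as your setup sentence suggests. On fractional copies the integer optimality cut need not minorize $\mathcal{Q}$ and its slope need not maximize the dual: for $d=1$, $\hat z=0$, $L=0$, $\mathcal{Q}(z)=\max\{10-20z,0\}$, the cut has slope $-10$, whereas the relaxed dual is maximized only by $\pi\le -20$.
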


\proof{Proof.}
The equality~\eqref{eq:ioc_tight} follows by direct substitution:
\[
\pi(\hat z;L)^\top \hat z + \omega(\hat z;L)
=(\underline{\mathcal{Q}}(\hat z)-L)(2\hat z-\mathbf 1)^\top \hat z
+\underline{\mathcal{Q}}(\hat z)-(\underline{\mathcal{Q}}(\hat z)-L)\mathbf 1^\top \hat z.
\]
Since $\hat z$ is binary, $(2\hat z-\mathbf 1)^\top \hat z=\mathbf 1^\top \hat z$, hence the two terms involving $(\underline{\mathcal{Q}}(\hat z)-L)\mathbf 1^\top \hat z$ cancel and the expression equals $\underline{\mathcal{Q}}(\hat z)$.

The second claim is the standard interpretation of integer optimality cuts in SDDiP: at a binary point $\hat z$, the identity $(2\hat z-\mathbf 1)^\top (z-\hat z)=-\|z-\hat z\|_1$ for binary $z$ implies that the integer optimality cut is a supporting hyperplane of the (concave, piecewise-linear) Lagrangian dual function at $\hat z$, and the cut intercept equals the Lagrangian function value at the maximizer.
\Halmos
\endproof

\begin{remark}[Uniqueness and finiteness\label{rem:ioc_finite}]
Under Assumption~\ref{ass:snd_finite_sets}, the binary domain of $z$ is finite. Since the integer optimality cut~\eqref{eq:ioc_cut} is determined uniquely by $(\hat z,L,\underline{\mathcal{Q}}(\hat z))$, at most one such tight cut is generated per binary point $\hat z$. Hence only finitely many integer optimality cuts can be generated at any node.
\end{remark}
\subsection{Strategic cuts via nonanticipativity constraints on local copies}
\label{subsec:strategic_cuts}

At node $n$, the post-decision state is $\mathbf{s}^n=(\mathbf{v}^n,\mathbf{q}^n,\mathbf{e}^n)$, where $\mathbf{e}^n$ becomes the inherited depot state in child nodes. To construct cuts for child value functions in a form that admits Lagrangian cut derivation, we introduce {local copies} (fishing variables) of the inherited state in each child problem. Specifically, for each child node $m\in\mathcal{C}(n)$ we introduce binary copy variables $\tilde{\mathbf{s}}^m=(\tilde{\mathbf v}^m,\tilde{\mathbf q}^m,\tilde{\mathbf e}^m)$ and enforce nonanticipativity constraints. Thus, all constraints in the child node that reference the inherited state are written instead in terms of $\tilde{\mathbf{s}}^m$.

Let $\underline{V}^m(\cdot)$ denote the current lower approximation of the child value function, represented as the point-wise maximum of affine functions induced by the strategic cut pool $\mathcal{H}(m)$. In the backward pass, we compute a new cut for node $m$ by solving a suitable relaxation of the following nodal approximation:
\begin{subequations}
\label{model:Vm-underline}
\begin{align}
\underline{V}^m(\mathbf{s}^n) \;:=\; \min_{\mathbf{x}^{m}\in \mathcal{X}(\tilde{\mathbf{s}}^m)} \quad
& \rho_m + \sum_{u\in\mathcal{C}(m)} p_{mu}\,\eta_u
\label{eq:Vm_obj} \\
\text{s.t.}\quad
& \eta_u \ge \pi_{\ell,u}^\top \mathbf{s}^m + \alpha_{\ell,u},
\qquad \forall u\in\mathcal{C}(m),\ \forall \ell\in\mathcal{H}(u)
\label{eq:Vm_childcuts} \\
& \rho_m \ge  \psi_{\ell,m}^\top \mathbf{x}^m + \xi_{\ell,m}^\top \tilde{\mathbf{e}}^{m} + \phi_{\ell,m},
\qquad \forall \ell\in\mathcal{G}(m)
\label{eq:Vm_routecuts} \\
& \tilde{\mathbf{s}}^m = \mathbf{s}^n,
\label{eq:Vm_fish} \\
& \tilde{\mathbf{s}}^m \in \{0,1\}^{|\mathbf{v}|+|\mathbf{q}|+|\mathbf{e}|}.
\label{eq:Vm_domain}
\end{align}
\end{subequations}
We dualize the nonanticipativity constraints~\eqref{eq:Vm_fish}. Let $\pi$ denote the associated unconstrained Lagrange multipliers. The resulting max--min Lagrangian relaxation yields a valid affine lower bound in the inherited state:
\begin{equation}
\underline{V}^m(\mathbf{s}^n)
=\max_{\pi}\ \Big\{
\pi^\top \mathbf{s}^n + \mathcal{L}_m^{s}(\pi)
\Big\},
\label{eq:Vm_lag_dual}
\end{equation}
where the Lagrangian function is
\begin{align}
\mathcal{L}_m^{s}(\pi)
:=\min_{\tilde{\mathbf{s}}^{m}\in\{0,1\},\ \mathbf{x}^{m}\in \mathcal{X}(\tilde{\mathbf{s}}^m)}\ \Big\{
\rho_m + \sum_{u\in\mathcal{C}(m)} p_{mu}\,\eta_u
- \pi^\top \tilde{\mathbf{s}}^{m}
\ :\ \mbox{\eqref{eq:Vm_childcuts}--\eqref{eq:Vm_routecuts}}
\Big\}.
\label{eq:Vm_lag_func}
\end{align}
\begin{lemma}[Validity and tightness of strategic cuts\label{lem:strategic_cut_valid}]
For any $\pi$, the affine function $\pi^\top \mathbf{s}^n + \mathcal{L}_m^{s}(\pi)$ is a global lower bound on $V^m(\mathbf{s}^n)$. Moreover, the integer optimality cut (Proposition~\ref{prop:integer-cut} and Proposition~\ref{prop:integer-cut-tight}) applied to the binary inherited state $\hat{\mathbf{s}}^n$ yields a globally valid cut that is tight at $\hat{\mathbf{s}}^n$.
\end{lemma}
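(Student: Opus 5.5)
The first claim is a weak-duality argument. I will compare $\pi^\top\mathbf{s}^n+\mathcal{L}_m^{s}(\pi)$ with the nodal approximation \eqref{model:Vm-underline}, and then compare that approximation with the true value $V^m$.

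Fix any $\pi$ and any binary inherited state $\mathbf{s}^n$. Take a feasible solution of \eqref{model:Vm-underline}. It satisfies $\tilde{\mathbf{s}}^m=\mathbf{s}^n$, so it is also feasible in the minimization defining $\mathcal{L}_m^{s}(\pi)$ in \eqref{eq:Vm_lag_func}. At that solution the penalty term $-\pi^\top\tilde{\mathbf{s}}^m$ equals $-\pi^\top\mathbf{s}^n$. Hence
\[
\mathcal{L}_m^{s}(\pi)\le \rho_m+\sum_{u\in\mathcal{C}(m)}p_{mu}\eta_u-\pi^\top\mathbf{s}^n .
\]
Minimizing the right-hand side over the feasible set of \eqref{model:Vm-underline} gives $\pi^\top\mathbf{s}^n+\mathcal{L}_m^{s}(\pi)\le \underline V^m(\mathbf{s}^n)$. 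Here $\underline V^m$ denotes the primal nodal value, which is also the meaning of the right side of \eqref{eq:Vm_lag_dual}, read as an inequality.

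Next I show $\underline V^m(\mathbf{s}^n)\le V^m(\mathbf{s}^n)$. The tactical cut pool $\mathcal{G}(m)$ consists of valid under-estimators of $\mathcal{R}^m$, so $\rho_m\le\mathcal{R}^m(\mathbf{x}^m;\tilde{\mathbf e}^m)$ at the optimum. Likewise $\mathcal{H}(u)$ consists of valid under-estimators of $V^u$, so $\eta_u\le V^u(\mathbf{s}^m)$. I will justify both by induction backward over the finite tree (Assumption~\ref{ass:snd_finite_sets}). The base case is the leaves, where $V^u$ is the terminal value \eqref{TERM} and every cut is valid by construction. For the inductive step I use that every cut added at a node satisfies the validity statement being proved.

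Then take the optimal assignment $\mathbf{x}^{m,*}$ in the exact Bellman recursion \eqref{eq:bellman_two_horizon}, which is exact by Assumption~\ref{ass:snd_markov}. Plug it, with $\tilde{\mathbf{s}}^m=\mathbf{s}^n$, into \eqref{model:Vm-underline}, setting $\rho_m$ and $\eta_u$ to the smallest values the cut constraints allow. This gives $\underline V^m(\mathbf{s}^n)\le V^m(\mathbf{s}^n)$. If $\mathcal{R}^m=+\infty$ for some assignment, the inequality holds trivially. Chaining the two inequalities gives validity for every $\pi$ and every binary $\mathbf{s}^n$.

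For the second claim I apply Proposition~\ref{prop:integer-cut} and Proposition~\ref{prop:integer-cut-tight} with $z=\mathbf{s}^n$, $d=|\mathbf v|+|\mathbf q|+|\mathbf e|$, $\mathcal{Q}=V^m$, and $\underline{\mathcal Q}=\underline V^m$ evaluated at the trial point $\hat{\mathbf{s}}^n$. The global lower bound $L$ will be $0$. All costs are nonnegative: $\delta_{ijk},\alpha_i,\beta_i\ge0$, so $V^m\ge0$. I also need $\underline V^m(\hat{\mathbf{s}}^n)\ge L$. This holds if the cut pools are initialized with the trivial cuts $\eta_u\ge0$ and $\rho_m\ge0$, which I will add explicitly as an assumption on initialization.

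Proposition~\ref{prop:integer-cut} then gives
\[
\pi(\hat{\mathbf{s}}^n;L)^\top\mathbf{s}+\omega(\hat{\mathbf{s}}^n;L)\le \underline V^m(\mathbf{s}) \quad \text{for all binary } \mathbf{s}.
\]
Composing with $\underline V^m\le V^m$ from the first part makes the cut globally valid for $V^m$. Proposition~\ref{prop:integer-cut-tight} gives tightness at $\hat{\mathbf{s}}^n$, meaning equality with $\underline V^m(\hat{\mathbf{s}}^n)$.

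One subtlety is that Proposition~\ref{prop:integer-cut} needs $\underline{\mathcal Q}(\hat z)$ only at the trial point, while the inequality is written as $\underline{\mathcal Q}(z)\ge\cdots$ for every $z$. To make this rigorous I will use the form of the integer optimality cut
\[
\underline V^m(\hat{\mathbf s}^n)-\big(\underline V^m(\hat{\mathbf s}^n)-L\big)\|\mathbf s-\hat{\mathbf s}^n\|_1 .
\]
At $\mathbf s=\hat{\mathbf s}^n$ this equals $\underline V^m(\hat{\mathbf s}^n)\le V^m(\hat{\mathbf s}^n)$. At any $\mathbf s\ne\hat{\mathbf s}^n$ we have $\|\mathbf s-\hat{\mathbf s}^n\|_1\ge1$, so the expression is at most $L\le V^m(\mathbf s)$.

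The main obstacle is the inductive justification that the cut pools in \eqref{eq:Vm_childcuts}–\eqref{eq:Vm_routecuts} under-estimate the true functions at every iteration. This circularity is resolved by ordering the argument over iterations and backward over stages.
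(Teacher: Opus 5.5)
Your proposal is correct and follows the paper's route: weak duality of the Lagrangian relaxation of \eqref{eq:Vm_fish} gives validity, and Propositions~\ref{prop:integer-cut} and~\ref{prop:integer-cut-tight} give the globally valid integer optimality cut that is tight at $\hat{\mathbf s}^n$. Your additions make explicit what the paper's one-line proof leaves implicit. These are the induction over the order of cut additions showing that the primal nodal value $\underline V^m$ under-estimates $V^m$ (the paper's ``hence on $V^m(\cdot)$''), the choice $L=0$ together with nonnegative initial cuts to guarantee $\underline V^m(\hat{\mathbf s}^n)\ge L$, and the direct check via $\|\mathbf s-\hat{\mathbf s}^n\|_1\ge 1$.
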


\proof{Proof.}
Validity follows from weak duality of the Lagrangian relaxation induced by~\eqref{eq:Vm_fish}: for any multipliers $\pi$, the Lagrangian value is a lower bound on the primal nodal value, hence on $V^m(\cdot)$. Tightness at the forward-visited discrete state follows from Proposition~\ref{prop:integer-cut-tight}.
\Halmos
\endproof

\subsection{Tactical cuts via nonanticipativity constraints on local copies}
\label{subsec:tactical_cuts}

We construct routing cuts analogously when approximating the tactical recourse $\mathcal{R}^n(x;e)$ while emphasizing vehicle separability. Fix a node $n$ and inherited depot indicators $e^{a(n)}$. Recall that the exact tactical recourse satisfies
\[
\mathcal{R}^n(x^n;e^{a(n)})=\sum_{k\in K}\mathcal{R}_k^n(x_k^n;e_k^{a(n)}),
\]
where each $\mathcal{R}_k^n(\cdot)$ is defined by the per-vehicle routing MILP~\eqref{REC}. To generate cuts that are affine in $(x^n,e^{a(n)})$, we introduce {local copies} $(\tilde x^n,\tilde e^n)$ of the strategic assignment and inherited depot indicators and impose the nonanticipativity constraints
\begin{equation}
\tilde x^n = x^n,
\qquad
\tilde e^n = e^{a(n)}.
\label{eq:tactical_nonant}
\end{equation}
We write $\tilde x^n=\{\tilde x_{ik}^n\}_{i\in\mathcal T,k\in K}$ and $\tilde e^n=\{\tilde e_{dk}^n\}_{d\in\mathcal D,k\in K}$. All tactical constraints that depend on $(x^n,e^{a(n)})$ are written in terms of $(\tilde x^n,\tilde e^n)$. Let $(\lambda^n,\mu^n)$ denote the unconstrained Lagrange multipliers associated with~\eqref{eq:tactical_nonant}, with components $\lambda_{ik}^n$ for $\tilde x_{ik}^n=x_{ik}^n$ and $\mu_{dk}^n$ for $\tilde e_{dk}^n=e_{dk}^{a(n)}$. Dualizing~\eqref{eq:tactical_nonant} yields a Lagrangian relaxation that decomposes by vehicle. Specifically, we define and formulate the per-vehicle Lagrangian subproblem as: 
\begin{subequations}
\label{model:tactical_lag_sub}
\begin{align}
\mathcal{L}_{k}^{t,n}(\lambda_k^n,\mu_k^n)
:=\quad
\min_{\substack{\mathbf{y}_k^n,\mathbf{b}_k^n,\mathbf{e}_k^n,\mathbf{a}_k^n,\\ \tilde{\mathbf{x}}_k^n,\tilde{\mathbf{e}}_k^n}}
\quad &
\sum_{(i,j)\in A}\delta_{ijk}\,y_{ijk}^n
\;-\;
\sum_{i\in\mathcal T}\lambda_{ik}^n\,\tilde x_{ik}^n
\;-\;
\sum_{d\in\mathcal D}\mu_{dk}^n\,\tilde e_{dk}^n
\label{eq:tactical_lag_sub_obj}
\\
\text{s.t.}\quad
& (\mathbf{y}^n_{k},\mathbf{b}^n_{k},\mathbf{e}^n_{k},\mathbf{a}^n_{k})
\in \mathcal{O}^n_k(\tilde{\mathbf{x}}^n_k),
\label{eq:tactical_lag_sub_Oblock}
\\
& b_{dk}^n = \tilde e_{dk}^n,
\qquad \forall d\in\mathcal D,
\label{eq:tactical_lag_sub_startfix}
\\
& \tilde x_{ik}^n\in\{0,1\},
\qquad \forall i\in\mathcal T,
\\
& \tilde e_{dk}^n\in\{0,1\},
\qquad \forall d\in\mathcal D,
\\
& y_{ijk}^n\in\{0,1\},
\qquad \forall (i,j)\in A,
\\
& b_{dk}^n,e_{dk}^n\in\{0,1\},
\qquad \forall d\in\mathcal D,
\\
& a_{ik}^n\ge 0,
\qquad \forall i\in(\mathcal T\cup\mathcal D).
\end{align}
\end{subequations}
Here $\tilde{\mathbf{x}}^n_k:=\{\tilde x_{ik}^n\}_{i\in\mathcal T}$ and $\tilde{\mathbf{e}}^n_k:=\{\tilde e_{dk}^n\}_{d\in\mathcal D}$ denote vehicle $k$'s local copies of its assigned tasks and inherited depot indicators. Constraint~\eqref{eq:tactical_lag_sub_startfix} is the depot carryover constraint written using the local copy $\tilde e_{dk}^n$. The aggregated Lagrangian function is the sum of per-vehicle subproblem objectives: 
\begin{equation}
\mathcal{L}^{t,n}(\lambda^n,\mu^n)
:=\sum_{k\in K}\mathcal{L}_{k}^{t,n}(\lambda_k^n,\mu_k^n).
\label{eq:tactical_lag_sum}
\end{equation}
The Lagrangian dual then yields the lower bound
\begin{equation}
\mathcal{R}^n(x^n;e^{a(n)})
\;\ge\;
\max_{\lambda^n,\mu^n}
\left\{
\sum_{i\in\mathcal T}\sum_{k\in K}\lambda_{ik}^n\,x_{ik}^n
+
\sum_{d\in\mathcal D}\sum_{k\in K}\mu_{dk}^n\,e_{dk}^{a(n)}
+
\mathcal{L}^{t,n}(\lambda^n,\mu^n)
\right\}.
\label{eq:tactical_lag_dual}
\end{equation}
By weak duality, for any multipliers $(\lambda^n,\mu^n)$, the expression inside the braces of~\eqref{eq:tactical_lag_dual} is a global lower bound on $\mathcal{R}^n(x^n;e^{a(n)})$ over all feasible $(x^n,e^{a(n)})$. Given any multipliers $(\lambda^n,\mu^n)$, define the affine function
\begin{equation}
\psi_{n}^\top x^n + \xi_{n}^\top e^{a(n)} + \phi_{n}
\ :=\
\sum_{i\in\mathcal T}\sum_{k\in K}\lambda_{ik}^n\,x_{ik}^n
+
\sum_{d\in\mathcal D}\sum_{k\in K}\mu_{dk}^n\,e_{dk}^{a(n)}
+
\mathcal{L}^{t,n}(\lambda^n,\mu^n),
\label{eq:tactical_affine_cut_def}
\end{equation}
where $\psi_n$ stacks $\{\lambda_{ik}^n\}_{i,k}$, $\xi_n$ stacks $\{\mu_{dk}^n\}_{d,k}$, and $\phi_n:=\mathcal{L}^{t,n}(\lambda^n,\mu^n)$. Then~\eqref{eq:tactical_lag_dual} implies the cut inequality
\begin{equation}
\mathcal{R}^n(x^n;e^{a(n)})
\;\ge\;
\psi_{n}^\top x^n + \xi_{n}^\top e^{a(n)} + \phi_{n},
\qquad \forall (x^n,e^{a(n)})\ \text{feasible at node } n.
\label{eq:tactical_cut_affine}
\end{equation}
SND adds such inequalities to the tactical cut pool $\mathcal{G}(n)$ and uses them in the under-estimator~\eqref{eq:R_under_def}.

\begin{lemma}[Validity and tightness of tactical cuts\label{lem:tactical_cut_valid}]
Each tactical cut generated by~\eqref{eq:tactical_cut_affine} is a global lower bound on $\mathcal{R}^n(x;e)$ over all feasible $(x,e)$.
Moreover, applying the integer optimality cut (Proposition~\ref{prop:integer-cut} and Proposition~\ref{prop:integer-cut-tight}) to the binary point $\hat z := (\hat x^n,e^{a(n)})$ yields a globally valid cut that is tight at $\hat z$.
\end{lemma}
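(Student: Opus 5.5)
The plan mirrors the proof of Lemma~\ref{lem:strategic_cut_valid}, adapted to the vehicle-separable Lagrangian~\eqref{eq:tactical_lag_sum}. I treat the two claims separately: validity via weak duality, and tightness via Propositions~\ref{prop:integer-cut} and~\ref{prop:integer-cut-tight}.

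For validity, fix arbitrary multipliers $(\lambda^n,\mu^n)$ and any feasible pair $(x,e)$ at node $n$. If some per-vehicle problem~\eqref{REC} is infeasible, then $\mathcal{R}^n(x;e)=+\infty$ and there is nothing to prove. Otherwise, for each $k\in K$ let $(\mathbf{y}_k,\mathbf{b}_k,\mathbf{e}_k,\mathbf{a}_k)$ be optimal for~\eqref{REC}. Setting $\tilde{\mathbf{x}}_k:=x_k$ and $\tilde{\mathbf{e}}_k:=e_k$ gives a feasible point of the subproblem~\eqref{model:tactical_lag_sub}, because~\eqref{eq:tactical_lag_sub_startfix} reduces to~\eqref{REC:start_fix} and the operational block is unchanged. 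Evaluating the Lagrangian objective at this point gives
\[
\mathcal{L}_k^{t,n}(\lambda_k^n,\mu_k^n)\le \mathcal{R}_k^n(x_k;e_k)-\sum_{i\in\mathcal T}\lambda_{ik}^n x_{ik}-\sum_{d\in\mathcal D}\mu_{dk}^n e_{dk}.
\]
Summing over $k$ via~\eqref{eq:Rdef} and~\eqref{eq:tactical_lag_sum}, then rearranging, yields~\eqref{eq:tactical_cut_affine} with the coefficients defined in~\eqref{eq:tactical_affine_cut_def}. This argument uses only feasibility of the copy, not optimality of the multipliers, so the cut holds for every multiplier choice and every feasible $(x,e)$.

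For tightness, set $z=(x,e)$, which is binary by~\eqref{EXT:bin_x} and~\eqref{EXT:bin_depot}. Take $\mathcal Q(z):=\mathcal R^n(x;e)$ and let $\underline{\mathcal Q}$ be the (valid) Lagrangian/under-estimator value. A global lower bound $L$ is available: since $\delta_{ijk}\ge 0$, we have $\mathcal R^n\ge 0$, so $L=0$ works, or any finite value of $\mathcal{L}^{t,n}$. Fix $\hat z=(\hat x^n,e^{a(n)})$ and build $\pi(\hat z;L)$ and $\omega(\hat z;L)$ as in~\eqref{eq:ioc_pi}--\eqref{eq:ioc_omega}.
\begin{itemize}
\item Proposition~\ref{prop:integer-cut} then gives validity on $\{0,1\}^d$.
\item Proposition~\ref{prop:integer-cut-tight} gives equality with $\underline{\mathcal Q}(\hat z)$ at $\hat z$.
\end{itemize}
When $\underline{\mathcal Q}(\hat z)=\mathcal R^n(\hat z)$, for instance by evaluating~\eqref{REC} exactly at the binary point, as the local-copy reformulation permits, the cut is tight for the true recourse.

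The main obstacle is infeasible points, where $\mathcal R^n=+\infty$. There the integer cut needs a finite $\underline{\mathcal Q}(\hat z)$, and one must ensure $\underline{\mathcal Q}(z)\ge L$ on all binary $z$, including infeasible ones. I would handle this by noting that validity is only claimed over feasible $(x,e)$, where the bound is finite. For infeasible $z$ the inequality is trivially satisfied by $+\infty$. Finally, because the copy constraints force binary $\tilde x,\tilde e$, the Lagrangian dual has the SDDiP strong-duality property at binary points (Zou et al.), which justifies taking $\underline{\mathcal Q}(\hat z)$ equal to the exact recourse value.
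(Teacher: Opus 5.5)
Your proposal is correct and follows the paper's proof. Validity comes from weak duality of the dualized copy constraints~\eqref{eq:tactical_nonant}, where you make the paper's one-line argument explicit by inserting $\tilde x_k=x_k$, $\tilde e_k=e_k$ into each per-vehicle subproblem and summing over $k$; tightness at $\hat z$ comes from direct appeal to Propositions~\ref{prop:integer-cut} and~\ref{prop:integer-cut-tight}.

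One small slip is in your choice of $L$: a value $\mathcal{L}^{t,n}(\lambda^n,\mu^n)$ is not in general a global lower bound on $\mathcal R^n$. It is one when $\lambda^n,\mu^n\ge 0$ (e.g.\ zero multipliers), since then $(\lambda^n)^\top x+(\mu^n)^\top e\ge 0$. So keep $L=0$, which you already justify via $\delta_{ijk}\ge 0$.
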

\proof{Proof.}
Validity follows from weak duality of the Lagrangian relaxation~\eqref{eq:tactical_lag_dual}: for any multipliers $(\lambda^n,\mu^n)$, the right-hand side of~\eqref{eq:tactical_affine_cut_def} is a lower bound on the primal tactical value for all feasible $(x^n,e^{a(n)})$.
Tightness at the forward-visited binary point follows from Proposition~\ref{prop:integer-cut-tight}.
\Halmos
\endproof

Figure~\ref{fig:SND_flow} illustrates the two-horizon decision process under SND at a representative scenario-tree node. Next, we show that the SND algorithm produces upper and lower bounds at every iteration.
\begin{figure}[htbp!]
\FIGURE{\includegraphics[width=0.8\linewidth]{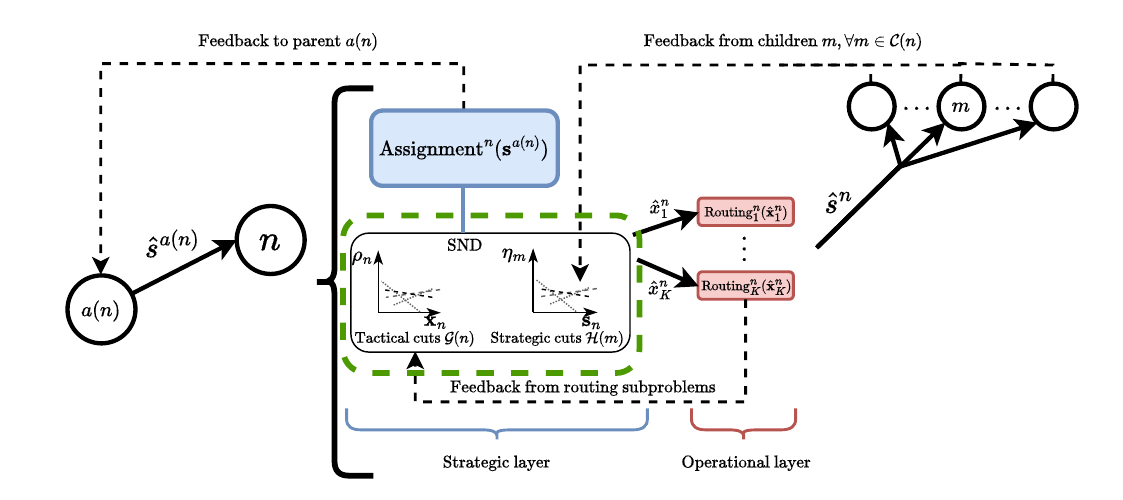}}
{Two-horizon Stochastic Nested Decomposition (SND) at a scenario-tree node. 
Solid arrows denote the forward simulation pass: we compute the strategic assignment using current cut-based approximations of $\mathcal{R}^n(\cdot)$ and $V^m(\cdot)$, and per-vehicle routing subproblems are solved in parallel to produce routing costs and the post-decision state. 
Dotted arrows denote the backward pass. The SND algorithm updates the tactical cut pools $\mathcal{G}(n)$ and strategic cut pools $\mathcal{H}(m)$ to tighten the outer approximations of the recourse and value-to-go functions using feedback from the forward pass. We can replace the cutting-plane generation in the green box with an actor-critic deep neural network approximation as we describe later in Section~\ref{sec:rl}.\label{fig:SND_flow}}
{}
\end{figure}

\begin{proposition}[Valid lower bound and statistical upper bound\label{prop:snd_bounds}]
At any iteration, let $\mathrm{LB}$ be the objective value at the root induced by~\eqref{eq:snd_approx_bellman} under the current cut pools, computed using exact tactical evaluation to induce post-decision states. Under Assumption~\ref{ass:snd_markov}, $\mathrm{LB}$ is a valid lower bound on the optimal objective value of~\eqref{eq:bellman_two_horizon}, and $\mathrm{LB}$ is non-decreasing over iterations.

Let $\widehat{\mathrm{UB}}$ be the sample-average cost obtained by simulating an incumbent feasible policy over $M_{\mathrm{eval}}$ sampled root-to-leaf paths, where within-stage routing is evaluated by any feasible tactical solution (optimal or heuristic). Then $\widehat{\mathrm{UB}}$ is a statistical estimate of the expected cost of that policy and hence a statistical estimate of an upper bound on the optimal objective value.
\end{proposition}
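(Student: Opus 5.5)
The plan has three parts: a backward induction over the finite scenario tree for the lower bound, cut-pool monotonicity for the non-decreasing property, and linearity of expectation plus optimality for the upper bound.

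\emph{Step 1: cut-wise validity.} The first step is to show that every affine piece in the strategic pools $\mathcal H(m)$ and tactical pools $\mathcal G(n)$ is a global under-estimator of its target function on the relevant binary domain. For strategic cuts this is Lemma~\ref{lem:strategic_cut_valid}. For tactical cuts it is Lemma~\ref{lem:tactical_cut_valid}. Integer optimality cuts are covered by Proposition~\ref{prop:integer-cut}.

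There is one point to be careful about. The child-node cuts are computed from the nodal approximation \eqref{model:Vm-underline}, which itself uses under-estimators at the grandchildren. The validity chain therefore needs to be set up as an induction on stages rather than cut by cut.

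\emph{Step 2: backward induction for the lower bound.} We show by backward induction on $t=T,T-1,\dots,1$ that
\[
\underline V^n(s)\le V^n(s)
\]
for every node $n\in\mathcal N_t$ and every inherited state $s$ in the finite set $\mathcal S^n$ (Assumption~\ref{ass:snd_finite_sets}).

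The base case is the leaves. There the terminal value \eqref{TERM} is either computed exactly, or bounded below by cuts built from a Lagrangian relaxation of the exact terminal problem. Weak duality then gives the inequality.

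For the inductive step, assume the claim holds at every child $m\in\mathcal C(n)$, and combine it with $\underline{\mathcal R}^n\le\mathcal R^n$ from Step~1. Fix any $x^n\in\mathcal X(s)$, and let $s^n$ be the post-decision state it induces. This state is the same in \eqref{eq:snd_approx_bellman} and in \eqref{eq:bellman_two_horizon}, because both use exact tactical evaluation. By Assumption~\ref{ass:snd_markov}, \eqref{eq:bellman_two_horizon} is exact, so the objective of \eqref{eq:snd_approx_bellman} is pointwise no larger than that of \eqref{eq:bellman_two_horizon}. Minimizing both over the same feasible set preserves the inequality.

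Since the cuts stored at $m$ are valid lower bounds on $V^m$ (Step~1 together with the induction hypothesis), taking the maximum over them also stays below $V^m$. Evaluating at the root gives $\mathrm{LB}=\underline V^{n_1}(s^{a(n_1)})\le V^{n_1}$.

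\emph{Step 3: monotonicity of $\mathrm{LB}$.} Cut pools only grow across iterations. Each under-estimator \eqref{eq:V_under_def} and \eqref{eq:R_under_def} is a maximum over its pool, so it is pointwise non-decreasing in the iteration count. The feasible set $\mathcal X(s^{a(n)})$ and the transition map do not change between iterations. The root objective in \eqref{eq:snd_approx_bellman} is therefore a minimum over a fixed set of a pointwise non-decreasing function, and its value is non-decreasing.

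\emph{Step 4: the statistical upper bound.} The incumbent policy is feasible: it selects assignments in $\mathcal X(\cdot)$ and uses feasible routes. Its expected cost $C^\pi$ is the expectation, over root-to-leaf paths, of the realized travel and terminal cost. The optimal value is the minimum over feasible policies, so $C^\pi\ge V^{n_1}$. Replacing optimal routing with any feasible heuristic routing can only raise the realized cost, so the expected cost stays at least $V^{n_1}$.

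Paths are sampled i.i.d. according to $\mathbb P$ (Assumption~\ref{ass:snd_sampling}). The sample average $\widehat{\mathrm{UB}}$ is then an unbiased and consistent estimator of $C^\pi$, by linearity of expectation and the law of large numbers on the finite tree. Hence $\mathbb E[\widehat{\mathrm{UB}}]\ge V^{n_1}$.

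\emph{Main obstacle.} The delicate step is Step~2. It requires that the cut inequalities are valid on all states the forward pass can reach, not only at the sampled point. It also requires that nested use of approximate child cuts inside the backward-pass subproblems still produces lower bounds on the true $V^m$. I would handle both by making the induction hypothesis cover every cut ever generated at stage $t+1$. Any cut then computed at stage $t$ uses only lower approximations, so by weak duality it remains a lower bound on $V^n$.
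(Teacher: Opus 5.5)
Your proposal follows the paper's proof. Every stored cut is a global under-estimator by Lemma~\ref{lem:strategic_cut_valid}, Lemma~\ref{lem:tactical_cut_valid} and Proposition~\ref{prop:integer-cut}. Substituting the pointwise maxima into the recursion, with the same exactly-induced post-decision state, gives $\mathrm{LB}\le V^{n_1}$. Growing cut pools give monotonicity, and a feasible-policy argument gives the upper bound.

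Your stage-wise induction, with the hypothesis covering every cut ever generated at stage $t+1$, is a worthwhile refinement rather than a different route. The paper's Lemma~\ref{lem:strategic_cut_valid} passes from ``lower bound on the primal nodal value'' to ``hence on $V^m$'' without noting that the nodal problem~\eqref{model:Vm-underline} is built from approximate grandchild cuts. Your induction is what justifies that step. The same holds for integer optimality cuts, whose anchor value $\underline{\mathcal Q}(\hat z)$ is such a nodal approximate value.

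One sentence in Step~4 is incorrect: heuristic routing does not ``only raise the realized cost.'' It raises the within-stage travel cost, but the route also determines two things that carry forward:
\begin{itemize}
\item the completion times, hence $r^n$ and $q^n$ via \eqref{EXT:r_time_agg}--\eqref{EXT:q_fail_clean};
\item the end depots $e^n$ inherited by the next stage.
\end{itemize}
A costlier route may finish a task before its failure time, or end at a better depot, and thereby lower downstream penalties. So no pathwise comparison is available.

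Moreover, a policy with heuristic routing is not literally a policy of \eqref{eq:bellman_two_horizon}, whose post-decision states are induced by optimal tactical solutions. The correct justification is what the paper's one-line ``any feasible policy on the scenario tree'' argument implicitly uses. Such a policy is a feasible solution of the tree-indexed program~\eqref{EXT}. The optimal value of \eqref{EXT} equals that of \eqref{eq:bellman_two_horizon} by the exactness of the two-horizon representation (Assumption~\ref{ass:snd_markov} and the remark at the end of Section~\ref{sec:two_horizon}).

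Separately, Assumption~\ref{ass:snd_sampling} only guarantees that each node is visited with positive probability. Unbiasedness of $\widehat{\mathrm{UB}}$ requires the evaluation paths to be drawn according to $\mathbb{P}$, or importance-reweighted. You should state this as part of the evaluation procedure rather than attribute it to that assumption.
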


\proof{Proof.}
{\textbf{Lower bound.}}
By Lemma~\ref{lem:strategic_cut_valid}, each Lagrangian cut generated by dualizing the nonanticipativity constraints~\eqref{eq:Vm_fish} is a global lower bound on $V^m(\cdot)$. Moreover, by Proposition~\ref{prop:integer-cut} (and Proposition~\ref{prop:integer-cut-tight}), the corresponding integer optimality cut applied to the binary inherited state $\hat{\mathbf s}^n$ is globally valid and tight at $\hat{\mathbf s}^n$. Hence every affine function stored in the strategic cut pool $\mathcal H(m)$ is a global lower bound on $V^m(\cdot)$, and therefore the point-wise maximum~\eqref{eq:V_under_def} satisfies
\[
\underline V^m(s)\ \le\ V^m(s),
\qquad \forall s\in\mathcal S^m.
\]

Likewise, by Lemma~\ref{lem:tactical_cut_valid} each tactical Lagrangian cut generated by dualizing the nonanticipativity constraints~\eqref{eq:tactical_nonant} is a global lower bound on $\mathcal R^n(\cdot\,;\cdot)$, and by Proposition~\ref{prop:integer-cut} (and Proposition~\ref{prop:integer-cut-tight}) the corresponding integer optimality cut applied to the binary point $\hat z:=(\hat x^n,e^{a(n)})$ is globally valid and tight at $\hat z$. Hence every affine function stored in the tactical cut pool $\mathcal G(n)$ is a global lower bound on $\mathcal R^n(\cdot\,;\cdot)$, and therefore the pointwise maximum~\eqref{eq:R_under_def} satisfies
\[
\underline{\mathcal R}^n(x;e)\ \le\ \mathcal R^n(x;e),
\qquad \forall (x,e)\ \text{feasible at node } n.
\]

Substituting these inequalities into the Bellman recursion implies that the approximate recursion~\eqref{eq:snd_approx_bellman} under-estimates the true recursion node-wise; hence the root objective $\mathrm{LB}$ under-estimates the optimal objective value of~\eqref{eq:bellman_two_horizon}.

{\textbf{Monotonicity.}}
Across iterations, SND only adds additional valid affine lower bounds to $\mathcal H(\cdot)$ and $\mathcal G(\cdot)$ (either Lagrangian cuts or integer optimality cuts). Thus the point-wise maxima in~\eqref{eq:V_under_def}--\eqref{eq:R_under_def} can only increase, and therefore $\mathrm{LB}$ is non-decreasing.

{\textbf{Statistical upper bound.}}
Any feasible policy on the scenario tree has expected cost greater than or equal to the optimal objective value. The simulation-based quantity $\widehat{\mathrm{UB}}$ is a sample-average estimate of the expected cost of an incumbent feasible policy; therefore it is a statistical estimate of an upper bound. 
\Halmos
\endproof

\subsection{Finite convergence}
\label{subsec:snd_convergence}

We now establish a finite-convergence guarantee for the proposed two-horizon SND algorithm. The proof extends the finite-termination argument of SDDiP~\citep{Zou2019StochasticProgramming} by incorporating the tactical cut pools $\mathcal G(\cdot)$ that under-estimate the routing recourse and by using integer optimality cuts to guarantee tightness at forward-visited binary points.

\begin{theorem}[Finite convergence with probability one\label{thm:snd_convergence}]
Under Assumptions~\ref{ass:snd_markov}, \ref{ass:snd_finite_sets}, and~\ref{ass:snd_sampling}, we further assume that whenever a node is visited in the backward pass, the algorithm generates at least one new cut that is tight at the corresponding forward-visited binary generation point (either a tactical cut tight at $(\hat x^n,e^{a(n)})$ or a strategic cut tight at $\hat s^n$), which is guaranteed by Proposition~\ref{prop:integer-cut-tight}. Then SND terminates in finitely many iterations with probability one, and at termination the root lower bound $\mathrm{LB}$ equals the optimal objective value of~\eqref{eq:bellman_two_horizon} (up to a prescribed tolerance).
\end{theorem}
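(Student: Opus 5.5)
The proof adapts the finite-termination argument of \citet{Zou2019StochasticProgramming} and proceeds in three steps.

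\textbf{Step 1: finitely many cuts.} By Assumption~\ref{ass:snd_finite_sets}, the tree has finitely many nodes. At each node $n$, the set of possible generation points is finite: the inherited states $\hat s^n\in\mathcal S^n$ and the pairs $(\hat x^n,e^{a(n)})$ with $\hat x^n\in\mathcal X(s^{a(n)})$. By Remark~\ref{rem:ioc_finite}, an integer optimality cut is determined by $(\hat z,L,\underline{\mathcal Q}(\hat z))$. I will argue by backward induction on $t$ that the values $\underline{\mathcal Q}(\hat z)$ used at each node range over a finite set.
\begin{itemize}
\item At stage $T$, the tactical recourse and the terminal values~\eqref{TERM} are evaluated exactly, so their cuts are exact at the generation point.
\item At stage $t<T$, the nodal value~\eqref{model:Vm-underline} is a minimum over finitely many assignments of finitely many cut-defined quantities. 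By induction these quantities take finitely many values.
\end{itemize}
Hence each pool $\mathcal H(m)$ and $\mathcal G(n)$ can only contain finitely many distinct cuts, and the process of adding cuts stabilizes.

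\textbf{Step 2: exactness on forward paths.} Suppose that from some iteration on, no new distinct cut is ever added. I will show by backward induction on $t$ that for every node $n$ and every generation point $\hat z$ visited thereafter, the approximation equals the true value: $\underline V^n(\hat s^{a(n)})=V^n(\hat s^{a(n)})$ and $\underline{\mathcal R}^n(\hat x^n;e^{a(n)})=\mathcal R^n(\hat x^n;e^{a(n)})$.
\begin{itemize}
\item \emph{Base case.} At leaves, the tactical cut tight at $(\hat x^n,e^{a(n)})$ from Lemma~\ref{lem:tactical_cut_valid} and Proposition~\ref{prop:integer-cut-tight} gives equality. Here $\underline{\mathcal Q}(\hat z)$ must be taken as the exact MILP value, which I will justify because the per-vehicle problems~\eqref{REC} are solved exactly.
\item \emph{Inductive step.} Assume the children are exact at the post-decision states they receive. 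Then the nodal problem~\eqref{eq:snd_approx_bellman} evaluated at the optimal forward choice $\hat x^n$ attains its true value. Since the approximation is also a lower bound by Proposition~\ref{prop:snd_bounds}, the node's approximate value equals $V^n$ at $\hat s^{a(n)}$. The integer optimality cut then transfers this value to the parent with tightness.
\end{itemize}

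\textbf{Step 3: the main obstacle.} The delicate point is the inductive step in Step 2. The forward minimizer $\hat x^n$ is computed under the approximations, so exactness is guaranteed only at the visited post-decision states. The concern is that the minimizer uses an underestimate at some unvisited child state, which would make the node's value too low.

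I would handle this in the standard SDDiP way, using Assumption~\ref{ass:snd_sampling}. Every node is sampled with positive probability in each iteration. Therefore, with probability one, every node is visited infinitely often along the forward policy's states. By assumption, every backward visit adds a new tight cut. So if some visited point were still inexact, a new distinct cut would be generated. This contradicts stabilization, which can happen only after finitely many iterations by Step 1.

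Consequently, after finitely many iterations the root value is exact. Formally, the event that some node is never visited after stabilization has probability zero, by a Borel--Cantelli argument on the independent samples. Hence $\mathrm{LB}$ equals the optimum of~\eqref{eq:bellman_two_horizon}. The forward policy then attains this value as well, so $\widehat{\mathrm{UB}}$ converges to it and the stopping test is met up to the prescribed tolerance. This gives finite termination with probability one.
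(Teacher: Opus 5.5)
Your argument is correct, modulo assumptions the paper also leaves implicit, but it is organized differently from the paper's.

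The paper tracks ``violated'' visited triples $(n,s,x)$. It claims that each revisit of a violated triple produces a cut that strictly raises the tactical or a child strategic under-estimator at the generation point. Remark~\ref{rem:ioc_finite} then bounds the number of such strict improvements, and sampling forces revisits until nothing is violated.

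You instead prove that the universe of cuts is finite, let the pools stabilize, and run a backward induction over stages along the stabilized forward policy. Exactness of the nodal value at the chosen $\hat x^n$ comes from the sandwich $\underline Q^n(s)\le V^n(s)\le \mathcal R^n(\hat x^n;e^{a(n)})+\sum_m p_{nm}V^m(\hat s^n)=\underline Q^n(s)$. Stabilization then turns ``the generated cut would be new'' into a contradiction.

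Your route supplies two steps the paper's proof skips. First, Proposition~\ref{prop:integer-cut-tight} only makes a cut tight relative to the \emph{current} nodal approximation $\underline{\mathcal Q}(\hat z)$. So the paper's claim that a strategic cut at $s'$ strictly raises $\underline V^m(s')$ whenever $\underline V^m(s')<V^m(s')$ is justified only once $m$'s downstream approximations are exact, and that is exactly what your stage-wise induction establishes. Second, your Step~1 repairs Remark~\ref{rem:ioc_finite}, which treats a cut as determined by its generation point even though $\underline{\mathcal Q}(\hat z)$ changes across iterations. The paper's version is shorter and is phrased per triple rather than per stabilized policy.

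Like the paper, you tacitly need two things, and you should state them:
\begin{itemize}
\item The pools contain only cuts from a finite family: integer optimality cuts with a fixed $L$ and exact tactical values, not Lagrangian cuts with arbitrary multipliers, which would break your Step~1.
\item Tie-breaking in the assignment and routing solves is deterministic, so that after stabilization each node receives a unique inherited state.
\end{itemize}

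Two smaller points. The ``unvisited child state'' worry in Step~3 is not the real obstacle: all children of $n$ receive the same $\hat s^n$ and get cuts there during the backward pass at $n$. The only issue is that such a cut is only as tight as the child's own approximation, which your induction handles. Also, with a fixed $M_{\mathrm{eval}}$, $\widehat{\mathrm{UB}}$ does not converge. Once the policy is optimal it is merely an unbiased estimate of the optimum, so termination via a UB test is only statistical.
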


\proof{Proof.}
Fix any realization of the algorithm’s random sampling sequence. By Proposition~\ref{prop:snd_bounds}, at every iteration $i$ the computed root value $\mathrm{LB}^i$ is a valid lower bound on the optimal objective value of~\eqref{eq:bellman_two_horizon} and the sequence $\{\mathrm{LB}^i\}_{i\ge 1}$ is non-decreasing.
For each node $n$, let $\underline V_i^n(\cdot)$ and $\underline{\mathcal R}_i^n(\cdot\,;\cdot)$ denote the under-estimators induced by the cut pools at iteration $i$. For an inherited state $s$ at node $n$, let $\underline Q_i^n(s)$ denote the value of the approximate nodal objective in~\eqref{eq:snd_approx_bellman} when the inherited state is $s$ (and the minimizer selects an assignment accordingly under the current under-estimators). Let $Q^n(s)$ denote the {true} nodal objective value, i.e., the Bellman one-stage optimization at node $n$ evaluated with the exact tactical recourse $\mathcal R^n(\cdot\,;\cdot)$ and exact downstream value functions.

Define the set of {visited triples} up to iteration $i$ as
\[
\mathcal{V}_i := \{(n,s,x)\;:\; \text{node $n$ with inherited state }s\text{ and chosen assignment }x\}.
\]
A visited triple $(n,s,x)\in \mathcal V_i$ is called {violated} at iteration $i$ if the current under-estimators strictly under-estimate at the forward-visited generation point, namely if either
\[
\underline{\mathcal R}_i^n(x;e^{a(n)}) < \mathcal R^n(x;e^{a(n)}),
\qquad \text{or}\qquad
\underline V_i^m(s') < V^m(s')\ \text{for some }m\in\mathcal C(n),
\]
where $s'$ is the post-decision state induced by {exact} tactical evaluation under $(n,s,x)$ and the realized information at node $n$, followed by the state-transition logic~\eqref{EXT:v_update}--\eqref{EXT:q_fail_clean}.

We first show that violation at a revisited triple cannot persist indefinitely. Fix $(n,s,x)$ and suppose that at some iteration $i$ it is revisited and violated. If $\underline{\mathcal R}_i^n(x;e^{a(n)}) < \mathcal R^n(x;e^{a(n)})$, then Proposition~\ref{prop:integer-cut-tight} implies that the backward pass at node $n$ can add an integer optimality cut for $\mathcal R^n(\cdot\,;\cdot)$ that is globally valid and tight at the binary pair $(x,e^{a(n)})$. Since the inequality is strict before adding the cut, adding a cut that is tight at $(x,e^{a(n)})$ strictly increases the value of the under-estimator $\underline{\mathcal R}^n(\cdot\,;\cdot)$ at that point, and therefore strictly increases the approximate nodal objective value at $(n,s)$ under the current forward-visited decision.

If instead, there exists a child $m\in\mathcal C(n)$ such that $\underline V_i^m(s') < V^m(s')$, then Proposition~\ref{prop:integer-cut-tight} implies that the backward pass can add an integer optimality cut for $V^m(\cdot)$ that is globally valid and tight at the binary state $s'$. Since the inequality is strict before adding the cut, this strictly increases $\underline V^m(\cdot)$ at $s'$, which again strictly increases the approximate nodal objective value at $(n,s)$ under the current forward-visited decision. Therefore, whenever a visited triple $(n,s,x)$ is revisited while violated, the backward pass produces at least one new cut that yields a strict improvement of the approximate nodal objective at that forward-visited generation point.

Next, we argue that only finitely many such strict improvements are possible. By Assumption~\ref{ass:snd_finite_sets}, the scenario tree is finite and both the inherited state space and feasible assignment set are finite at every node; hence the set of possible visited triples $(n,s,x)$ is finite. Moreover, by Remark~\ref{rem:ioc_finite}, at any fixed node there are only finitely many distinct integer optimality cuts that can be generated, since the binary domains are finite and each integer optimality cut is uniquely determined by its generation point. Consequently, for each node $n$ the collection of distinct affine functions that can ever appear in $\underline V^n(\cdot)$ and $\underline{\mathcal R}^n(\cdot\,;\cdot)$ is finite, and therefore the number of strict improvements of the approximate nodal objective at any fixed visited triple is finite.

Finally, consider the sampling process. Under Assumption~\ref{ass:snd_sampling}, every node of the finite tree is visited infinitely often with probability one. Because the inherited state space and assignment sets are finite, any visited triple $(n,s,x)$ that can occur with positive probability under the algorithm’s sampling and decision rules is revisited infinitely often with probability one. Combining this with the previous paragraphs, we conclude that with probability one there exists a finite iteration $I$ such that for all $i\ge I$ no revisited triple is violated, i.e., the under-estimators coincide with the true functions at all forward-visited generation points.

In particular, at iteration $I$, the approximate Bellman recursion coincides with the true Bellman recursion along the forward-induced decisions at the root, and therefore $\mathrm{LB}^I$ equals the optimal objective value (up to the prescribed tolerance). Since $\{\mathrm{LB}^i\}$ is non-decreasing and always a valid lower bound, this implies finite termination with probability one and optimality of the returned policy.
\Halmos
\endproof

\begin{algorithm}[!htbp]
\small
\caption{Two-horizon SND algorithm for inspection routing}
\label{alg:SND_twohorizon}
\begin{algorithmic}[1]
\State \textbf{Initialization:}
$i \gets 1$; initialize cut pools 
$\mathcal{H}(m)\gets\emptyset$ and $\mathcal{G}(n)\gets\emptyset$ for all $m,n\in\mathcal{N}$.
\While{stopping criterion not satisfied}
\State Sample $M$ root-to-leaf scenario paths 
$\Omega^{i}=\{\omega^{i}_{1},\dots,\omega^{i}_{M}\}$.
\State \textbf{Forward pass}
\For{$k=1,\dots,M$}
\State Initialize the inherited state at the root of path $\omega_k^i$.
\For{$n\in\omega^{i}_{k}$ with $t(n)\le T-1$}
\State Solve \eqref{eq:snd_approx_bellman} using the current cut-based approximation; store $\hat{\mathbf{x}}_{n}^{i}$.
\State Evaluate the tactical routing recourse 
\eqref{REC}--\eqref{eq:Rdef} under $(\hat{\mathbf{x}}_{n}^{i},\mathbf{e}^{a(n)})$.
\State Compute the post-decision state using \eqref{EXT:v_update}--\eqref{EXT:q_fail_clean}.
\State Pass the post-decision state to the next node on the path.
\EndFor
\State At the leaf node $n\in\omega_k^i$ with $t(n)=T$, evaluate the terminal cost \eqref{TERM}.
\EndFor
\State \textbf{Backward pass}
\For{$t=T-1,\dots,1$}
\For{$n\in\mathcal{N}_{t}$}
\If{$n$ is visited by some sampled path in $\Omega^i$}
\State Generate a tactical cut \eqref{eq:tactical_cut_affine} and add it to $\mathcal{G}(n)$.
\For{$m\in\mathcal{C}(n)$}
\State Generate a strategic cut from \eqref{eq:Vm_fish} and add it to $\mathcal{H}(m)$.
\EndFor
\EndIf
\EndFor
\EndFor
\State \textbf{Lower bound update}
\State Compute the root lower bound by solving \eqref{eq:snd_approx_bellman} at the root with the current cuts; set $i\gets i+1$.
\State \textbf{Optional policy evaluation}
\State Simulate the incumbent policy over $M_{\mathrm{eval}}$ sampled paths and compute $\widehat{\mathrm{UB}}$ as a sample-average cost.
\EndWhile
\end{algorithmic}
\end{algorithm}

Algorithm~\ref{alg:SND_twohorizon} summarizes the proposed two-horizon SND method. The forward pass uses exact tactical evaluation to ensure validity and tightness of generated cuts. The optional policy evaluation step produces $\widehat{\mathrm{UB}}$ for reporting and for comparisons with RL-based approximations, which will be described next.

\section{Reinforcement Learning Approach}
\label{sec:rl}

Given that the endogenous system state admits a compact binary representation and the stage-wise strategic decision is a finite, feasibility-constrained assignment, we propose an RL approach in which strategic assignment decisions are learned from simulated system trajectories. The RL formulation provides an alternative policy class and a computationally scalable approach for large instances where repeated solution of tree-indexed mixed-integer programs is impractical.

\subsection{MDP formulation}
\label{subsec:rl_mdp}

We model the strategic assignment layer as a finite-horizon Markov decision process over stages $t\in\{1,\dots,T\}$. At the beginning of stage $t$, the decision maker observes the state
\[
\mathbf{X}_{t-1}:=\bigl(\mathbf{v}_{t-1},\mathbf{q}_{t-1},\mathbf{e}_{t-1},\mathbf{\phi}_{t-1}\bigr),
\]
where $\mathbf{v}_{t-1}$ and $\mathbf{q}_{t-1}$ are the task completion and failure indicators, $\mathbf{e}_{t-1}$ encodes the depot location of each vehicle at the start of stage $t$ (consistent with the carryover logic in~\eqref{EXT:carry}), and $\mathbf{\phi}_{t-1}$ denotes auxiliary risk information used to summarize features of the endogenous failure model.

An action is a binary assignment $\mathbf{a}_t:=\{x_{ik,t}\}_{i\in\mathcal{T},k\in K}$ constrained by per-vehicle capacity and completion eligibility:
\begin{subequations}\label{eq:rl_action_feas}
\begin{align}
&\sum_{i\in\mathcal{T}} x_{ik,t} \le B_{k,t},
 \forall k\in K, \label{eq:rl_budget}\\
&\sum_{k\in K} x_{ik,t} \le 1 - v_{i,t-1},
 \forall i\in\mathcal{T}, \label{eq:rl_no_reassign}\\
&x_{ik,t} \in \{0,1\},
 \forall i\in\mathcal{T},\ \forall k\in K. \label{eq:rl_binary}
\end{align}
\end{subequations}

Given $(\mathbf{X}_{t-1},\mathbf{a}_t)$, the environment evaluates the tactical layer by querying an oracle consistent with the two-horizon representation. In the {oracle} variant, the response is obtained by solving the per-vehicle tactical routing problems~\eqref{REC} with inherited depots $\mathbf{e}_{t-1}$ and fixed assignment $\mathbf{a}_t$. The oracle returns (i) the routing cost $R_t(\mathbf{a}_t;\mathbf{e}_{t-1})$, (ii) completion times $\{a_{ik,t}\}$ that determine whether assigned inspections are completed before failure times, and (iii) the next depot state $\mathbf{e}_t$.

We define the one-step cost as
\begin{equation}
c_t(\mathbf{X}_{t-1},\mathbf{a}_t)
:=
R_t(\mathbf{a}_t;\mathbf{e}_{t-1})
+\mathbb{I}\{t=T\}\sum_{i\in\mathcal{T}}
\bigl(\alpha_i q_{i,T} + \beta_i(1-v_{i,T})\bigr),
\label{eq:rl_stage_cost}
\end{equation}
which matches the travel and terminal penalty structure in~\eqref{EXT:obj}. The state transition from $\mathbf{X}_{t-1}$ to $\mathbf{X}_t$ is induced by the routing solution and by the endogenous failure realization in stage $t$. In particular, completion is updated as in~\eqref{EXT:v_update}. For each task $i$, we declare a successful pre-failure inspection at stage $t$ if the assigned vehicle completes the task by its realized failure time, consistent with the logic in~\eqref{EXT:r_assign_agg}--\eqref{EXT:r_time_agg}; failures propagate as in~\eqref{EXT:q_monotone} and are triggered as in~\eqref{EXT:q_fail_clean}. The objective is to compute a feasible policy $\pi$ that minimizes $\mathbb{E}_{\pi}[\sum_{t=1}^{T} c_t(\mathbf{X}_{t-1},\mathbf{a}_t)]$.

\subsection{Actor--critic solution method with deep value-function approximation}
\label{subsec:rl_actor_critic}

We interpret RL as simulation-based approximate dynamic programming over the strategic horizon. We restrict attention to policy classes that generate feasible assignments by construction. Specifically, we parameterize a stochastic policy $\pi_{\theta}(\cdot\mid \mathbf{X})$ over assignment decisions and enforce feasibility through masking: at each stage, any action component that violates~\eqref{eq:rl_action_feas} is assigned zero probability mass, and thus every sampled assignment is feasible without repair heuristics.

We adopt an actor--critic framework. The actor is the parameterized policy $\pi_{\theta}$, and the critic is a deep neural network that approximates the state value function
\[
V_{\psi}(\mathbf{X})
\approx
\mathbb{E}_{\pi_{\theta}}\!\left[\sum_{\tau=t}^{T} c_{\tau}(\mathbf{X}_{\tau-1},\mathbf{a}_{\tau})
\ \bigg|\ \mathbf{X}_{t-1}=\mathbf{X}\right],
\]
where $\psi$ denotes the critic network parameters.

For a simulated trajectory, define the one-step temporal-difference (TD) error
\begin{equation}
\delta_t
:=
c_t(\mathbf{X}_{t-1},\mathbf{a}_t)
+\mathbb{I}\{t<T\}\,V_{\psi}(\mathbf{X}_t)
- V_{\psi}(\mathbf{X}_{t-1}),
\label{eq:rl_td_error}
\end{equation}
and the (possibly multi-step) advantage estimate $\widehat{A}_t$ (e.g., TD($\lambda$)) computed from $\{\delta_t\}_{t=1}^T$. The actor is updated using a policy-gradient step with advantage weighting:
\begin{equation}
\nabla_{\theta} J(\theta)
=
\mathbb{E}\left[
\sum_{t=1}^{T}
\nabla_{\theta}\log \pi_{\theta}(\mathbf{a}_t\mid \mathbf{X}_{t-1})\,
\widehat{A}_t
\right].
\label{eq:policy_gradient}
\end{equation}
The critic is trained by stochastic gradient descent on a squared TD objective:
\begin{equation}
\min_{\psi}\ \mathbb{E}\left[\sum_{t=1}^{T} \delta_t^2\right],
\label{eq:rl_critic_loss}
\end{equation}
which performs function approximation of the strategic cost-to-go. The critic is learned from sample paths generated under the current policy and evaluated through the tactical routing oracle. The principal computational bottleneck in training is the repeated evaluation of $R_t(\mathbf{a}_t;\mathbf{e}_{t-1})$ and the induced completion-time logic that determines successful inspections and state evolution. At each stage, the environment calls an exact routing evaluator for the tactical subproblem, returning the realized routing cost and the state transition $(\mathbf{v}_t,\mathbf{q}_t,\mathbf{e}_t)$.
 
In contrast to SND, the assignment rule is not obtained via cutting-plane approximations of value functions. Instead, we replace this approximation with a black-box learning loop, see Figure~\ref{fig:SND_flow}, where simulated trajectories are used to adjust the actor (policy) and critic (value-function approximation) through gradient-based learning.

\begin{algorithm}[t]
\small
\caption{Actor--critic learning with embedded routing evaluation}
\label{alg:rl}
\begin{algorithmic}[1]
\State \textbf{Inputs:} horizon $T$; simulator implementing endogenous failure dynamics; routing evaluator (oracle or surrogate).
\State Initialize actor parameters $\theta$ and critic parameters $\psi$.
\For{episode $=1,2,\dots$}
\State Initialize state $\mathbf{X}_0=(\mathbf{v}_0,\mathbf{q}_0,\mathbf{b}_0,\mathbf{\phi}_0)$.
\For{$t=1,\dots,T$}
\State Construct feasibility mask from \eqref{eq:rl_action_feas} given $\mathbf{X}_{t-1}$.
\State Sample feasible assignment 
$\mathbf{a}_t \sim \pi_{\theta}(\cdot \mid \mathbf{X}_{t-1})$ under the mask.
\State Evaluate the routing layer to obtain $R_t(\mathbf{a}_t;\mathbf{b}_{t-1})$ and compute the next state $\mathbf{X}_t$.
\State Compute stage cost $c_t$ via \eqref{eq:rl_stage_cost} and store $(\mathbf{X}_{t-1},\mathbf{a}_t,c_t,\mathbf{X}_t)$.
\State Compute TD error $\delta_t$ using \eqref{eq:rl_td_error}.
\EndFor
\State Compute advantage estimates $\{\widehat{A}_t\}_{t=1}^{T}$ (e.g., TD($\lambda$)) from $\{\delta_t\}_{t=1}^{T}$.
\State Update critic parameters $\psi$ by minimizing \eqref{eq:rl_critic_loss} on the collected trajectory.
\State Update actor parameters $\theta$ using the policy-gradient estimator \eqref{eq:policy_gradient}.
\EndFor
\end{algorithmic}
\end{algorithm}

\section{Numerical Studies}
\label{sec:experiments}

This section evaluates the computational performance and solution quality of the proposed stochastic optimization and RL approaches on synthetic inspection instances constructed from the Sioux Falls benchmark network  \citep[see, e.g.,][]{transportationnetwork}. 

\subsection{Experimental setup}

We evaluate the solution of the dynamic inspection problem we propose with different solution models. We compare four methods:
\begin{itemize}
  \item \textbf{Exact:} we solve the tree-indexed extended formulation~\eqref{EXT} directly with Gurobi.
  \item \textbf{One-horizon SND:} we apply SND on the one-horizon tree formulation with nodal routing subproblems.
  \item \textbf{Two-horizon SND:} we run Algorithm~\ref{alg:SND_twohorizon} with both strategic and tactical cut pools.
  \item \textbf{RL:} we approximate the two-horizon policy using an actor--critic architecture with embedded routing evaluation.
\end{itemize}

All experiments were conducted under identical instance data, failure-time distributions, and scenario-tree structures. For MIP-based methods, we report runtime over multiple replications and optimality gaps at fixed
time checkpoints. All experiments were conducted on a machine with an Apple M2 processor and 16GB RAM using Python 3.13 and Gurobi 13.0 to solve all linear programming (LP) and mixed-integer linear programming (MILP) problems. We set three hours as the overall runtime limit on any instance. 

\subsection{Instance setup}

We base all instances on variants of the Sioux Falls transportation network \citep{transportationnetwork, LeBlancMorlokPierskalla1975}. We treat the $24$ original nodes as inspection tasks and we add a depot in the periphery of the city. We construct a directed arc set from the benchmark connectivity. For each city, we place one depot outside the city boundary. For each depot, we add depot-to-task arcs obtained from shortest paths on the underlying city network. These depot-to-task walks traverse the original network but do not execute inspections at intermediate task nodes. Vehicles start evenly across depots at the first stage. At later stages, the model selects the start depot through state carryover, and it may choose to end at any depot at the end of each stage.

We generate three network sizes. Instance \textbf{S1} includes a single city (of one Sioux Falls network)  with $|\mathcal{T}| = 24$ tasks and one depot. Instance \textbf{S2} duplicates the city and connects two copies of the Sioux Falls network with eight bidirectional inter-city highways, yielding $|\mathcal{T}| = 48$ inspection tasks at 48 nodes and two depots. Instance \textbf{S4} tiles four copies of the city and adds the corresponding inter-city highways, yielding $|\mathcal{T}| = 96$ tasks and four depots. In Figure \ref{fig:InstanceSolution}, we illustrate the tiling of two Sioux Falls cities to construct the S2 instance.

\begin{figure}[htbp!]
\FIGURE{\includegraphics[width=0.7\linewidth]{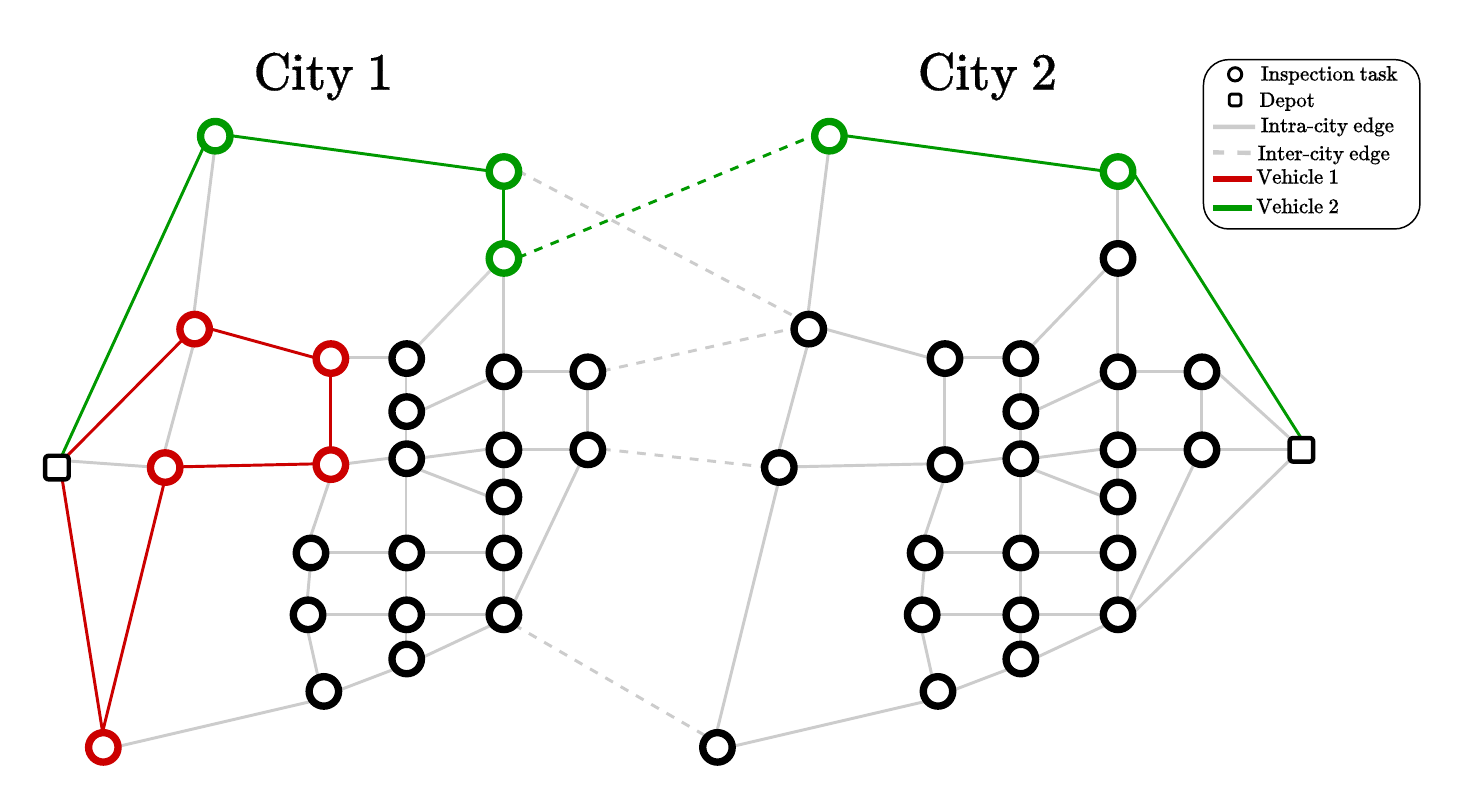}}
{S2 instance illustration with $K=2$ and one vehicle changing depot.\label{fig:InstanceSolution}}
{}
\end{figure}

We set travel times $\delta_{ijk}$ proportional to the geometric arc length in the constructed network. We use a constant service time $\kappa_{ik} = \kappa$ for all tasks $i \in \mathcal{T}$ and vehicles $k \in K$. Each stage corresponds to an $8$-hour workday. 

Let $|\mathcal{T}|$ denote the number of inspection tasks. Given a horizon length $T$ and a fleet size $|K|$, we define the minimum required inspections per vehicle per stage needed to cover all tasks by the end of the horizon under nominal routing as
\begin{align}
\rho \;=\; \left\lceil \frac{|\mathcal{T}|}{T \cdot |K|} \right\rceil .
\end{align}
We set assignment budgets $B_{k,t}$ and shift lengths $\gamma_k$ to target $\rho$ inspections per vehicle per stage. We restrict attention to instances satisfying $3 \le \rho \le 5$ to avoid configurations that require unrealistically dense inspections within a single shift. We set the stage start times $\theta_{kt}$ to align with the daily start and specify a shift duration $\gamma_k$ for each vehicle $k \in K$.

\subsection{Scenario tree and failure modes}
We represent uncertainty with a stage-wise scenario tree. At each node $n$ at stage $t(n)$, the instance specifies failure thresholds $\{\varphi_i^n\}_{i \in \mathcal{T}}$ and realized failure indicators $\{f_i(n)\}_{i \in \mathcal{T}}$. We generate the scenario tree by branching on regime realizations that shift failure thresholds forward or backward in time. Under branching factor $B=2$, each node branches into a high-failure regime and a low-failure regime. The high-failure regime shifts failure thresholds earlier, and the low-failure regime shifts failure thresholds later. Under branching factor $B=3$, we add a neutral regime that leaves failure thresholds unchanged. Once a task fails at a node, it remains failed at all descendant nodes. This monotonicity ensures that the state process remains consistent with progressive degradation. With $B=2$, the scenario tree contains $2^{T}$ leaves. With $B=3$, the tree contains $3^{T}$ leaves. We report the number of leaves and total tree nodes for each tested instance in Table~\ref{tab:instances-definition-ordered-arcs}.

To study how solution methods respond to different failure dynamics, we vary both the urgency of inspections and the spatial structure of failure propagation across instances. Urgency controls how rapidly penalties accrue when inspections are delayed: low-urgency settings allow inspections to be deferred with limited risk, whereas high-urgency settings impose steep penalties for late intervention and therefore favor rapid reallocation of inspection effort.
Propagation structure determines how failures spread across the network. In clustered propagation, failures remain largely localized, creating opportunities for efficient intra-city routing and stable inspection sequences. In contrast, linear propagation induces broader spatial coupling, as failures can propagate across cities and depots, increasing the value of inter-city movement and dynamic reassignment.

\subsection{Test instances}

Based on the previous description of parameters and scenario tree construction, in Table \ref{tab:instances-definition-ordered-arcs} we name and describe the instances that we will use to run the numerical experiments on all the methods. 

\begin{table}[t]
\TABLE
{Test instances following the naming \texttt{Network-K\#-T\#-B\#}. We report vehicles $|K|$, stages $T$, branching factor $B$, network statistics (tasks $|\mathcal{T}|$, depots $|\mathcal{D}|$, arcs $|A|$), scenario-tree leaves and nodes, and required inspections per vehicle per stage $\rho=\lceil |\mathcal{T}|/(T|K|)\rceil$.\label{tab:instances-definition-ordered-arcs}}
{
\begin{tabular}{lrrrrrrrrr}
\hline
\up\down Instance & $|K|$ & $T$ & $B$ & $|\mathcal{T}|$ & $|\mathcal{D}|$ & $|A|$ & Leaves & Nodes & $\rho$ \\
\hline
\up S1-K2-T5-B2  & 2 & 5  & 2 & 24 & 1 & 92  & 32   & 63   & 3 \\
S1-K2-T5-B3      & 2 & 5  & 3 & 24 & 1 & 92  & 243  & 364  & 3 \\

S2-K4-T3-B2      & 4 & 3  & 2 & 48 & 2 & 224 & 8    & 15   & 4 \\
S2-K4-T3-B3      & 4 & 3  & 3 & 48 & 2 & 224 & 27   & 40   & 4 \\
S2-K2-T5-B2      & 2 & 5  & 2 & 48 & 2 & 224 & 32   & 63   & 5 \\
S2-K2-T5-B3      & 2 & 5  & 3 & 48 & 2 & 224 & 243  & 364  & 5 \\
S2-K4-T5-B2      & 4 & 5  & 2 & 48 & 2 & 224 & 32   & 63   & 3 \\
S2-K4-T5-B3      & 4 & 5  & 3 & 48 & 2 & 224 & 243  & 364  & 3 \\
S2-K2-T10-B2     & 2 & 10 & 2 & 48 & 2 & 224 & 1024 & 2047 & 3 \\
S2-K6-T3-B2      & 6 & 3  & 2 & 48 & 2 & 224 & 8    & 15   & 3 \\
S2-K6-T3-B3      & 6 & 3  & 3 & 48 & 2 & 224 & 27   & 40   & 3 \\

S4-K4-T5-B2      & 4 & 5  & 2 & 96 & 4 & 480 & 32   & 63   & 5 \\
S4-K4-T5-B3      & 4 & 5  & 3 & 96 & 4 & 480 & 243  & 364  & 5 \\
S4-K4-T10-B2     & 4 & 10 & 2 & 96 & 4 & 480 & 1024 & 2047 & 3 \\
S4-K6-T5-B2      & 6 & 5  & 2 & 96 & 4 & 480 & 32   & 63   & 4 \\
\down S4-K6-T5-B3 & 6 & 5  & 3 & 96 & 4 & 480 & 243  & 364  & 4 \\
\hline
\end{tabular}}
{}
\end{table}

\subsection{Runtime of exact solution methods}
\label{subsec:runtime_converged}

We evaluate the time required to reach an optimality gap of 0.1\% optimality gap for the instances in Table \ref{tab:instances-definition-ordered-arcs} employing the extended formulation direct solve and the SND-based approaches. We report average runtime and standard deviation over 5 replications. Table~\ref{tab:runtime_converged_10800} shows that the two-horizon representation yields little improvement on smaller instances, where cut generation, nodal problem construction, and management dominate, but reduce runtime by roughly 20\% on the larger instances that still converge within the 3-hour time limit.

\begin{table}[t]
\TABLE
{Runtime in seconds to reach a 0.1\% optimality gap.\label{tab:runtime_converged_10800}}
{\small
\begin{tabular}{lrrr}
\hline
\up\down Instance & Exact & One-horizon SND & Two-horizon SND \\
\hline
\up S1-K2-T5-B2  & $223.4\pm23.1$  & $307.2\pm33.4$  & $291.8\pm30.2$ \\
S1-K2-T5-B3      & $323.2\pm37.4$  & $356.5\pm43.2$  & $338.7\pm39.1$ \\

S2-K4-T3-B2      & $892.4\pm60.0$  & $487.1\pm33.2$  & $423.5\pm29.9$ \\
S2-K4-T3-B3      & $1292.7\pm91.0$ & $692.4\pm50.1$  & $602.1\pm44.7$ \\
S2-K2-T5-B2      & $5413.3\pm347.8$& $2238.6\pm163.4$& $1791.1\pm148.6$ \\
S2-K2-T5-B3      & $7144.2\pm399.7$& $3259.4\pm243.8$& $2606.8\pm221.9$ \\
S2-K4-T5-B2      & $3568.8\pm279.2$& $2232.4\pm155.3$& $1786.2\pm141.2$ \\
S2-K4-T5-B3      & $6801.3\pm383.4$& $3105.9\pm219.6$& $2484.7\pm199.9$ \\
S2-K2-T10-B2     & --              & --              & $4501.4\pm297.6$ \\
S2-K6-T3-B2      & $2982.1\pm220.7$& $1885.3\pm133.1$& $1508.2\pm120.6$ \\
S2-K6-T3-B3      & $4189.4\pm258.2$& $2627.8\pm197.3$& $2101.6\pm178.8$ \\

S4-K4-T5-B2      & $6016.4\pm396.8$& $3129.8\pm228.7$& $2503.6\pm207.9$ \\
S4-K4-T5-B3      & $8076.2\pm454.1$& $4348.4\pm287.6$& $3479.1\pm260.9$ \\
S4-K4-T10-B2     & --              & --              & $6436.3\pm421.8$ \\
S4-K6-T5-B2      & --              & --              & $3221.0\pm241.3$ \\
\down S4-K6-T5-B3 & --             & --              & $4843.2\pm347.9$ \\
\hline
\end{tabular}}
{We report mean $\pm$ standard deviation over five replications. 
Entries marked ``--'' indicate instances that exceeded the 3-hour time limit.}
\end{table}

\subsection{Convergence Behavior}
\label{subsec:gaps}

We compare the optimality gaps at different runtimes to report the observed solution gap as a function of runtime budget for those instances that had methods not reach the target optimality gap within the 3-hour  limit. In Table \ref{tab:gaps_large_single_consistent} we provide the gap analysis for the four instances that showed this behavior.

\begin{table}[t]
\TABLE
{Optimality gaps (\%) over time for larger instances where at least one method does not reach the 0.1\% tolerance within three hours. \label{tab:gaps_large_single_consistent}}
{\scriptsize
\begin{tabular}{llcccccccc}
\hline
\up\down & & \multicolumn{8}{c}{Time (seconds)} \\
\cline{3-10}
\up\down Instance & Method & 100 & 200 & 400 & 800 & 1600 & 3200 & 6400 & 10800 \\
\hline
\up \multirow{3}{*}{S2-K2-T10-B2}
& Exact      & 60.0 & 55.1 & 48.4 & 40.2 & 32.1 & 22.0 & 12.1 & \textbf{7.5} \\
& One-horizon SND & 18.2 & 15.3 & 12.1 & 9.2  & 6.6  & 4.2  & 3.2  & \textbf{2.6} \\
& Two-horizon SND & 15.1 & 12.1 & 9.9  & 7.4  & 4.6  & 0.35 & 0.07 & * \\

\multirow{3}{*}{S4-K4-T10-B2}
& Exact      & 70.4 & 66.0 & 60.1 & 52.3 & 44.2 & 33.8 & 19.7 & \textbf{8.9} \\
& One-horizon SND & 22.3 & 19.4 & 16.2 & 13.4 & 10.1 & 6.6  & 4.1  & \textbf{3.0} \\
& Two-horizon SND & 18.7 & 15.8 & 12.6 & 8.8  & 4.2  & 1.20 & 0.12 & * \\

\multirow{3}{*}{S4-K6-T5-B2}
& Exact      & 55.6 & 50.8 & 44.3 & 36.5 & 28.7 & 18.2 & 10.0 & \textbf{5.8} \\
& One-horizon SND & 16.7 & 14.1 & 11.6 & 9.6  & 7.1  & 4.1  & 3.1  & \textbf{2.4} \\
& Two-horizon SND & 14.9 & 11.8 & 7.2  & 2.6  & 0.62 & 0.12 & *    & * \\

\multirow{3}{*}{S4-K6-T5-B3}
& Exact      & 68.2 & 62.7 & 54.8 & 45.1 & 35.6 & 24.1 & 13.8 & \textbf{7.2} \\
& One-horizon SND & 19.4 & 17.0 & 14.6 & 12.3 & 9.1  & 5.6  & 3.8  & \textbf{2.9} \\
\down & Two-horizon SND & 16.5 & 13.4 & 9.3  & 4.2  & 3.10 & 1.25 & *    & * \\
\hline
\end{tabular}}
{Bold values denote final optimality gaps at the 3-hour time limit for methods that did not converge. A symbol * indicates that the method reached the 0.1\% tolerance and terminated before the time limit.}
\end{table}

Two-horizon SND reaches the 0.1\% optimality-gap tolerance on all 16 instances within the 10{,}800-second cap. In contrast, solving the exact model directly (i.e., Exact) exceeds the time limit with a gap that's larger than that of the one-horizon SND in the hard instances. Among the instances where both SND variants converge, Two-horizon SND reduces average time-to-0.1\% by roughly 20\%
relative to One-horizon SND, with the largest gains occurring on the larger networks and deeper scenario trees.

\subsection{Solution and Policy Analysis}\label{sec:Solutions}

We summarize several interpretable metrics that describe how a policy allocates inspection effort in space and time.
We define ``intra-city routes'' and ``inter-city routes'' as the fraction of traversed arcs that remain within the same city (or region) versus those that cross city boundaries; these metrics capture the degree of spatial reallocation induced by the policy.
We define ``vehicle switching'' as the percentage of stage transitions in which a vehicle starts the next stage from a different depot (city) than in the previous stage, which measures how aggressively a policy relocates resources across the network.
We define ``early inspections'' as the fraction of completed inspections that occur in the first portion of the planning horizon (e.g., early stages relative to $T$), which captures how strongly the policy front-loads preventive effort.
Finally, we define ``propagation containment'' as the percentage reduction in realized downstream failures relative to a no-intervention baseline under the same failure realization, which measures how effectively the policy limits failure spread.
In Table~\ref{tab:snd_rl}, we report these metrics for SND and RL across urgency and propagation regimes.

\begin{table}[t]
\TABLE
{Comparison of SND and RL solution characteristics across urgency levels and propagation structures (S4 instances).\label{tab:snd_rl}}
{\small
\setlength{\tabcolsep}{4pt}
\begin{tabular}{lllrrrr}
\hline
\up\down Urgency & Propagation & Metric & \multicolumn{2}{c}{SND} & \multicolumn{2}{c}{RL} \\
\cline{4-5}\cline{6-7}
\up\down  &  &  & Mean & Std & Mean & Std \\
\hline
\up Medium & Clustered
 & Intra-city routes (\%)        & \textbf{92.3} & 4.2 & 48.6 & 6.4 \\
 &  & Inter-city routes (\%)      & 7.7 & 3.1 & \textbf{51.4} & 6.4 \\
 &  & Vehicle switching (\%)      & 14.6 & 5.1 & \textbf{41.2} & 7.3 \\
 &  & Early inspections (\%)      & \textbf{68.4} & 6.3 & 44.1 & 8.2 \\
 &  & Propagation containment (\%)& 76.8 & 5.4 & \textbf{86.2} & 4.6 \\

Medium & Linear
 & Intra-city routes (\%)        & \textbf{70.8} & 6.2 & 38.7 & 7.4 \\
 &  & Inter-city routes (\%)      & 29.2 & 6.2 & \textbf{61.3} & 7.4 \\
 &  & Vehicle switching (\%)      & 22.4 & 6.3 & \textbf{49.5} & 8.1 \\
 &  & Early inspections (\%)      & \textbf{58.7} & 7.4 & 36.2 & 9.1 \\
 &  & Propagation containment (\%)& 69.1 & 6.5 & \textbf{83.4} & 5.2 \\

High & Clustered
 & Intra-city routes (\%)        & \textbf{74.6} & 7.3 & 33.8 & 8.6 \\
 &  & Inter-city routes (\%)      & 25.4 & 7.3 & \textbf{66.2} & 8.6 \\
 &  & Vehicle switching (\%)      & 27.3 & 8.4 & \textbf{57.6} & 9.2 \\
 &  & Early inspections (\%)      & \textbf{54.1} & 8.6 & 30.9 & 10.4 \\
 &  & Propagation containment (\%)& 63.5 & 7.8 & \textbf{81.7} & 6.3 \\

High & Linear
 & Intra-city routes (\%)        & \textbf{60.9} & 8.1 & 27.6 & 9.3 \\
 &  & Inter-city routes (\%)      & 39.1 & 8.1 & \textbf{72.4} & 9.3 \\
 &  & Vehicle switching (\%)      & 33.7 & 9.0 & \textbf{65.1} & 10.2 \\
 &  & Early inspections (\%)      & \textbf{47.6} & 9.2 & 26.4 & 11.1 \\
\down &  & Propagation containment (\%)& 58.2 & 8.4 & \textbf{78.3} & 7.1 \\
\hline
\end{tabular}}
{We report percentages averaged over simulated scenarios; standard deviations appear in parentheses. Boldface indicates the method with the higher mean for that metric and regime.}
\end{table}

Table~\ref{tab:snd_rl} shows that SND and RL produce consistently different inspection and routing policies across urgency levels and propagation structures. SND solutions emphasize intra-city routing and earlier inspections, particularly under clustered propagation and
moderate urgency. This behavior reflects the planning bias induced by value-function cuts, which internalize expected downstream penalties and favor stable sequencing when future states evolve predictably.

RL exhibits systematically higher levels of inter-city routing and vehicle switching across all configurations. This behavior reflects the flexibility of the learned policy class, which
reallocates inspection effort aggressively across the network. As a result, RL achieves stronger propagation containment in high-urgency and linear-propagation regimes, where rapid global response dominates the benefits of early commitment. However, this flexibility comes at the cost of later inspections and higher routing variability, particularly in settings where failures evolve gradually and localized sequencing is effective.

To compare policies on a common scale, we report regime-wise cost ratios of RL relative to SND.
For each urgency--propagation regime $r$, we compute out-of-sample estimates of the expected total cost
$J^\pi(r)$ and its decomposition into travel, failure (late/realized failure) penalties, and missed-inspection (terminal) penalties.
We then report the relative indices
\[
\text{RelTotal}(r)=\frac{J^{\mathrm{RL}}(r)}{J^{\mathrm{SND}}(r)},\quad
\text{RelTrav}(r)=\frac{J_{\mathrm{trav}}^{\mathrm{RL}}(r)}{J_{\mathrm{trav}}^{\mathrm{SND}}(r)},\quad
\text{RelFail}(r)=\frac{J_{\mathrm{fail}}^{\mathrm{RL}}(r)}{J_{\mathrm{fail}}^{\mathrm{SND}}(r)},\quad
\text{RelMiss}(r)=\frac{J_{\mathrm{miss}}^{\mathrm{RL}}(r)}{J_{\mathrm{miss}}^{\mathrm{SND}}(r)}.
\]
Values below $1$ indicate that RL is cheaper (better) than SND for that component, while values above $1$ indicate that SND is cheaper.
Boldface highlights the dominant method for each metric direction (i.e., the smaller ratio).

\begin{table}[t]
\TABLE
{Regime-wise cost ratios (RL/SND) for S4 instances.\label{tab:cost_relative_filled_nocontain}}
{\small
\setlength{\tabcolsep}{6pt}
\begin{tabular}{llrrrr}
\hline
\up\down Urgency & Propagation & Total Cost & Travel & Failure & Missed \\
\hline
\up Medium & Clustered & 1.15 & 1.22 & 0.95 & 0.98 \\
& Linear & 0.95 & 1.18 & 0.88 & 0.92 \\
High & Clustered & 0.92 & 1.20 & 0.82 & 0.93 \\
\down & Linear & 0.90 & 1.25 & 0.78 & 0.92 \\
\hline
\end{tabular}}
{Values report the ratio RL/SND. Ratios below 1 favor RL and ratios above 1 favor SND.}
\end{table}

In medium urgency with clustered propagation, SND is the cost-dominant approach: the total-cost ratio is $1.15$,
meaning RL is about 15\% more expensive out-of-sample. This aligns with the behavioral patterns in Table~\ref{tab:snd_rl}:
RL relocates vehicles more aggressively, increasing travel overhead (Rel.\ Travel $>1$), while the resulting reductions in
downstream penalties are limited in localized, slowly evolving regimes. Even though RL slightly improves penalty components
(Rel.\ Failure and Rel.\ Missed $<1$), these gains do not offset the travel increase, and thus the total cost favors SND.

As the dynamics become more globally coupled (linear propagation) and/or penalties become steeper (high urgency),
the ordering flips: RL becomes 5--10\% cheaper overall (Rel.\ Total Cost $=0.90$--$0.95$).
In these regimes, RL still pays a travel premium (Rel.\ Travel $>1$), but achieves substantially lower penalty components
(Rel.\ Failure $=0.78$--$0.88$ and Rel.\ Missed $\approx 0.92$--$0.93$), consistent with the idea that rapid global redeployment
is valuable when failures spread across cities and late intervention is expensive.

\section{Conclusion}
\label{sec:conclu}

In this paper, we addressed the challenge of dynamic infrastructure inspection under endogenous (decision-dependent) uncertainty. Our model explicitly accounts for the feedback loop where timely inspection actions alter the evolution of component failure probabilities. To manage the resulting computational complexity, we introduced a two-horizon representation that decouples strategic task assignment from tactical routing, bridging the gap between long-term risk mitigation and daily operational constraints.

Our research yields several key methodological and managerial insights that underscore the value of the two-horizon framework. Primarily, we demonstrate the methodological efficiency of the SND algorithm. By integrating integer state reduction with tactical cut generation, we can solve computationally challenging multi-stage stochastic mixed-integer programs. This approach highlights the viability of exact decomposition in complex infrastructure settings. 

Beyond exact methods, the RL framework emerges as a scalable, data-driven alternative in real-time responsiveness. While exact decomposition enjoys provable optimality, the RL policy demonstrates good performance in high-dimensional state spaces. This trade-off between scalability and interpretability suggests that the optimal solution approach is fundamentally dictated by the failure characteristics of the system. For infrastructure networks facing localized, slow-moving risks where precision is paramount, the exact SND approach is most beneficial. Conversely, the RL framework proves superior for managing high-urgency, expansive threats where the necessity for rapid adaptation outweighs marginal gains in optimality.

For future research, this framework opens several avenues. Accounting for varying sensor capabilities and endurance limits would introduce significant layers of strategic complexity to the model. Furthermore, expanding the scope to include simultaneous restoration and repair decisions would transition the current model into a comprehensive resilience framework. Finally, the integration of real-time Internet of Things data streams into the RL training environment could facilitate a ``digital twin'' approach to infrastructure monitoring.

\bibliographystyle{informs2014trsc}
\bibliography{InfraMonitoring}

\end{document}